 \theoremstyle{plain}
 \newtheorem{lemma}{Lemma}[section]
\newtheorem{theorem}[lemma]
{Theorem }
\newtheorem{corollary}[lemma]
{Corollary}
\newtheorem*{thma}{Theorem 1.1'}
\newtheorem*{thmb}{Theorem 1.2'}
\theoremstyle{definition}
\newtheorem{definition}[lemma]{Definition }
\newtheorem{ex}[lemma]
{Example }
\newtheorem{rmk}[lemma]{Remark}
\newcommand{\lgw}{\longrightarrow}
\newcommand{\lgm}{\longmapsto}
\newcommand{\ovl}{\overline}
\renewcommand{\deg}{\text{deg}\,}
\newcommand{\ord}{\text{ord}}
\newcommand{\wdt}{\widetilde}
\renewcommand{\O}{\Omega}
\newcommand{\m}{\mathfrak{m}}
\newcommand{\Z}{\mathbb{Z}}
\renewcommand{\k}{\Bbbk}
\newcommand{\Spec}{\text{Spec}}
\newcommand{\R}{\mathbb{R}}
\newcommand{\N}{\mathbb{N}}
\newcommand{\Q}{\mathbb{Q}}
\newcommand{\D}{\Delta}
\newcommand{\lb}{\llbracket}
\newcommand{\rb}{\rrbracket}
\renewcommand{\a}{\alpha}
\renewcommand{\b}{\beta}
\newcommand{\g}{\gamma}
\renewcommand{\phi}{\varphi}
\renewcommand{\d}{\delta}
\begin{document}
\title[\L ojasiewicz inequality]{\L ojasiewicz inequality over the ring of power series in two variables}
\author{Guillaume Rond}
\address{Institut de Math\'ematiques de Luminy\\
  Facult\'e des Sciences de Luminy\\
 Case 907, 163 av. de Luminy\\
13288 Marseille Cedex 9, France\\} 

\begin{abstract}
We prove a \L ojasiewicz type inequality for a system of polynomial equations with coefficients in the ring of formal power series in two variables. This result is an effective version of the Strong Artin Approximation Theorem. From this result we deduce a bound of Artin functions of isolated singularities. 
\end{abstract}

\maketitle

\section{Introduction}

Let $(A,\m)$ be a  Noetherian complete local ring. The powers of the maximal ideal of $A$ defines a metric topology on $A$, called the Krull topology, the norm being defined as
$$||z||:=e^{-\ord(z)},\ \ \forall z\in A$$
where $\ord(z):=\sup\{n\in\N\ /\ z\in\m^n\}$ for all $z\in A$, $z\neq 0$. This norm extends to $A^m$ using the max norm.\\
In this paper we are interested in inequalities relating the distance to the zero set of a polynomial map defined over $A$ to the values of this map.\\
In the case $A$ is a discrete valuation ring (thus a ring of dimension 1) we have
 the following result:
\begin{theorem}\cite{Gr}\label{Gr}
Let $A$ be a complete discrete valuation ring.
Let 
$$f(z):=(f_1(z),...,f_n(z))\in A[z]^n,\ z:=(z_1,...,z_m).$$ Then there exist $a$, $b\geq 0$   such that
$$\forall c\in\N \ \forall \ovl{z}\in A^m \text{ such that } f(\ovl{z})\in\m^{ac+b}$$
$$\exists \wdt{z}\in A^m \text{ such that } f(\wdt{z})=0 \text{ and } \wdt{z}_j-\ovl{z}_j\in \m^c,\ 1\leq j\leq m.$$
\end{theorem}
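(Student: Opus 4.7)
The plan is to proceed by Noetherian induction on the ideal $I=(f_1,\ldots,f_n)\subset A[z]$, with the base case $I=A[z]$ being vacuous. The main analytic tool will be Tougeron's strong form of Hensel's lemma: given $\ovl{z}\in A^m$ and an $r\times r$ minor $\delta$ of the Jacobian of $r$ of the $f_i$ at $\ovl{z}$ such that $f_i(\ovl{z})\in\delta(\ovl{z})^2\m^{c}$ for these indices, there exists $\wdt{z}\in A^m$ with $\wdt{z}-\ovl{z}\in\delta(\ovl{z})\m^c$ which annihilates the chosen $r$ polynomials. Because $A$ is a DVR, $\ord$ takes integer values and this hypothesis is implied by $\ord(f(\ovl{z}))\geq 2\,\ord(\delta(\ovl{z}))+c$, which will be the form actually used.

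To upgrade ``annihilates $r$ of the $f_i$'' to ``annihilates all $n$'', I would invoke a Jacobian-criterion argument: choosing $r$ equal to the generic rank of the Jacobian along a component of $V(I)$ and $\delta$ so that $V(\delta)\cap V(I)$ cuts out the singular locus of that component, the remaining $f_i$ lie in the ideal generated by the chosen $r$ after inverting $\delta$. Hence an exact zero of the selected $r$ equations at which $\delta$ is nonzero automatically kills all $n$, and a standard Artin--Rees bound controls the extra power of $\delta$ needed.

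Given $\ovl{z}$ with $f(\ovl{z})\in\m^{ac+b}$, the argument then splits according to the smallest $\m$-adic order $h$ of such a Jacobian minor at $\ovl{z}$. In the \emph{smooth regime} $2h+c\leq ac+b$, Tougeron applies directly and produces the desired $\wdt{z}$. In the \emph{singular regime}, $h$ is large, so $\ovl{z}$ is a deep approximate solution of the strictly larger ideal $I':=I+J$, where $J$ is generated by the relevant Jacobian minors together with $f$. Applying the induction hypothesis to $I'$ with its own constants $(a',b')$ yields a nearby point that exactly annihilates $I'$, hence $I$. One chooses $a$ and $b$ so that both thresholds are forced simultaneously: the first so that small $h$ triggers Tougeron with accuracy $c$, the second so that when $h$ is large, the induced approximate solution of $I'$ has enough $\m$-adic depth to meet the requirement $a'c+b'$ of the induction hypothesis.

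The main obstacle is the bookkeeping of constants through the Noetherian descent. The ideal $I'$ strictly contains $I$ unless $V(I)$ is already generically smooth at every candidate $\ovl{z}$, so the descent terminates by Noetherianity of $A[z]$; but one has to verify at each layer that Tougeron's thresholds compose with the inductive constants and remain affine in $c$. The hypothesis $\dim A=1$ is used decisively here: $\ord(\delta(\ovl{z}))$ is a nonnegative integer, so $2\,\ord(\delta(\ovl{z}))+c$ is itself an integer and the affine form of the Greenberg bound $ac+b$ is preserved through the recursion. It is precisely this step that fails for $A=\k\lb X,Y\rb$, which is why a more delicate analysis is required in the present paper.
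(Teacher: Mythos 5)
The paper does not prove this theorem; it simply cites \cite{Gr}, so there is no in-paper proof to compare against. Your outline is the standard Artin--Tougeron strategy (Noetherian induction on the ideal, Tougeron's strong implicit function theorem in the smooth regime, strict enlargement of the ideal by Jacobian data in the singular regime). This differs from Greenberg's original 1966 argument but is the strategy that generalizes, and it is the skeleton of the paper's own proof of Theorem~\ref{main_theorem}. With the bookkeeping you indicate --- choosing $a,b$ so that for every $h=\ord(\d(\ovl z))$ either $2h+c\leq ac+b$ (Tougeron regime) or $h\geq a'c+b'$ (inductive regime for the enlarged ideal) --- the scheme can be made to close.

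There is, however, a genuine misdiagnosis of where $\dim A=1$ enters, and it matters because it is exactly the step that fails over $\k\lb x,y\rb$. You write that $\dim A=1$ is used because ``$\ord(\d(\ovl z))$ is a nonnegative integer.'' That is true over \emph{every} Noetherian local ring and has nothing to do with dimension. What you in fact used, without flagging it, is the implication stated at the outset: that $\ord\bigl(f_i(\ovl z)\bigr)\geq 2\,\ord\bigl(\d(\ovl z)\bigr)+c$ forces Tougeron's hypothesis $f_i(\ovl z)\in\bigl(\d(\ovl z)^2\bigr)\m^c$. This holds in a DVR precisely because every nonzero ideal is a power of $\m$, so $\bigl(\d(\ovl z)^2\bigr)\m^c=\m^{2\,\ord(\d(\ovl z))+c}$. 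Over $\k\lb x,y\rb$ the principal ideal $\bigl(\d(\ovl z)^2\bigr)$ is strictly smaller than $\m^{2\,\ord(\d(\ovl z))}$ (e.g.\ $\d(\ovl z)=x$, then $y\in\m$ but $y\notin(x)$), so an order inequality no longer gives the needed membership; this is the obstruction the paper removes in Theorem~\ref{main_theorem} by Weierstrass-dividing out a representative of $\d(\ovl z)^2$ and descending to a system over $\k\lb x\rb$. Attributing the dimension restriction to integrality of $\ord$ would mislead a reader about where the argument actually depends on $A$ being a DVR.

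Two smaller points. First, promoting a zero of the $r$ selected $f_i$ to a zero of all of $I$ is done through the colon ideals $\bigl((f_i,\ i\in E):I\bigr)$ appearing in Elkik's ideal $H_{f_1,\dots,f_n}$ (Definition~\ref{Elk}): one needs a $k_E$ in that colon with $k_E(\wdt z)\neq 0$ to conclude $f_j(\wdt z)=0$ for all $j$. Calling this ``an Artin--Rees bound'' is not accurate. Second, the termination condition for the Noetherian descent is that $I'=I+J$ strictly contains $I$, which fails exactly when $J\subset I$, i.e.\ when the non-smooth locus is all of $\Spec(A[z]/I)$ --- not ``when $V(I)$ is generically smooth at every candidate $\ovl z$'', which has the logic reversed. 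The corner case $J\subset I$ does need to be addressed separately (reduce to $\sqrt{I}$, or take primary components), and in a full proof this should be spelled out.
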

The case $a=0$ in Theorem \ref{Gr} corresponds to the case $f^{-1}(0)=\emptyset$, i.e there exists a constant $b\in\N$ such that there does not exists $\wdt{z}\in A^r$ with $f(\wdt{z})\in \m^{b}$. In the case $f^{-1}(0)\neq\emptyset$, using the norm defined before it is well known that Theorem \ref{Gr} is equivalent to the following result:
\begin{thma}\label{reform} Let $A$ be a complete discrete valuation ring. Let $f(z)\in A[z]^n$ such that $f^{-1}(0)\neq\emptyset$. 
$$\exists \a\geq 0,\ C>0 \text{ s.t. } ||f(\ovl{z})||\geq Cd(f^{-1}(0),\ovl{z})^{\a} \ \ \forall \ovl{z}\in A^m$$
where $$||f(\ovl{z})||=\max_i||f_i(\ovl{z})||\ \text{ and } \ d(f^{-1}(0),\ovl{z})=\inf_{u\in A / f(u)=0}||u-\ovl{z}||.$$
\end{thma}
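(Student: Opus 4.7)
The plan is to prove the two implications separately, with essentially elementary manipulations of the bounds, the only subtlety being that one direction needs completeness of $A$ together with the discreteness of the valuation to convert an infimum-bound into an actual element of $f^{-1}(0)$.

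For the direction Theorem \ref{Gr} $\Rightarrow$ Theorem 1.1', I start with arbitrary $\ovl{z}\in A^m$ and set $d:=\ord(f(\ovl{z}))$. If $d=+\infty$ then $\ovl{z}\in f^{-1}(0)$ and the inequality is trivial. Otherwise, pick $c$ to be the largest integer with $ac+b\leq d$ (so that $f(\ovl{z})\in\m^{ac+b}$, and $c\geq(d-b)/a-1$ when $d\geq b$, while $c=0$ suffices when $d<b$). Theorem \ref{Gr} produces $\wdt{z}\in f^{-1}(0)$ with $\|\wdt{z}-\ovl{z}\|\leq e^{-c}$, hence $d(f^{-1}(0),\ovl{z})\leq e^{-c}$. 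Unwinding the inequality $c\geq(d-b)/a-1$ gives $\|f(\ovl{z})\|=e^{-d}\geq e^{-(a+b)}d(f^{-1}(0),\ovl{z})^a$, so one may take $\a=a$ and $C=e^{-(a+b)}$.

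For the converse, assume $\|f(\ovl{z})\|\geq Cd(f^{-1}(0),\ovl{z})^\a$ for all $\ovl{z}$. Given $c\in\N$ and $\ovl{z}$ with $f(\ovl{z})\in\m^{ac+b}$ (for constants $a,b$ to be pinned down in terms of $\a$ and $C$), the assumption gives
$$d(f^{-1}(0),\ovl{z})\leq \bigl(\|f(\ovl{z})\|/C\bigr)^{1/\a}\leq C^{-1/\a}\,e^{-(ac+b)/\a}.$$
Choosing any integer $a\geq\a$ and $b$ large enough that $C^{-1/\a}e^{-b/\a}\leq 1$ forces $d(f^{-1}(0),\ovl{z})\leq e^{-c}$. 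It remains to upgrade this to the existence of an actual $\wdt{z}\in f^{-1}(0)$ at distance $\leq e^{-c}$: since the values of $\|\cdot\|$ lie in $\{e^{-n}\}_{n\in\N}\cup\{0\}$, either the infimum in $d(f^{-1}(0),\ovl{z})$ is attained by some $\wdt{z}$, or it equals $0$. In the latter case, a minimizing sequence $(u_k)$ is Cauchy (by the ultrametric inequality), hence converges by completeness of $A$ to some $u\in A^m$ with $u=\ovl{z}$; continuity of the polynomial map $f$ for the Krull topology yields $f(u)=0$, so $\ovl{z}\in f^{-1}(0)$ and we set $\wdt{z}=\ovl{z}$.

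The only step that requires any care is this last one, and it is completely routine in the complete discretely valued setting; everything else reduces to taking logarithms and comparing $c$ with $d=\ord(f(\ovl{z}))$. Thus the equivalence is essentially formal once one observes that $f^{-1}(0)$ is closed in the Krull topology and that the norm takes values in a discrete set.
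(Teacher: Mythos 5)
Your proof is correct and takes essentially the same route as the paper: one direction by choosing $c$ comparable to $\ord(f(\ovl z))$ and applying Greenberg, the other by inverting the inequality and extracting an exact zero at the required distance. You are slightly more careful than the paper in spelling out why a distance bound $d(f^{-1}(0),\ovl z)\le e^{-c}$ actually yields an element of $f^{-1}(0)$ at that distance (discreteness of the value set, plus completeness and closedness of $f^{-1}(0)$ for the boundary case), and in treating $\ovl z\in f^{-1}(0)$ explicitly, but these are the same ideas the paper leaves implicit.
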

\begin{proof}
Let $\ovl{z}\in A^m$ and let $c\in\N$ be defined by $e^{-c}=d(f^{-1}(0),\ovl{z}))$. Then we claim that $f(\ovl{z})\notin \m^{a(c+1)+b}$. Indeed if it were not the case there would exist $\wdt{z}\in A^m$ such that $f(\wdt{z})=0$ and $\wdt{z}_j-\ovl{z}_j\in \m^{c+1}$, $1\leq j\leq m$,  and we would get 
$$d(f^{-1}(0),\ovl{z})\leq ||\wdt{z}-\ovl{z}||\leq e^{-(c+1)}<d(f^{-1}(0),\ovl{z})$$
which is not possible. Thus $||f(\ovl{z})||>e^{-(a(c+1)+b)}$, i.e. 
$$ ||f(\ovl{z})||\geq e^{-a-b+1}d(f^{-1}(0),\ovl{z})^{a} \ \ \forall \ovl{z}\in A^m.$$
Thus the inequality is satisfied with $C= e^{-a-b+1}$ and $\a=a$.\\
On the other hand, let us assume that Theorem 1.1' is satisfied. Let $\ovl{z}\in A^m$ be such that $f(\ovl{z})\in\m^{\a c+b}$ where $e^{-b}=C$. Then $||f(\ovl{z})||\leq Ce^{-\a c}$. Thus $d(f^{-1}(0),\ovl{z})\leq e^{-c}$ and there exists  $\wdt{z}\in A^m$  such that $f(\wdt{z})=0$  and  $\wdt{z}_j-\ovl{z}_j\in \m^c$, $1\leq j\leq m.$ This proves that Theorem \ref{Gr} is satisfied.

\end{proof}

This kind of inequality is true if we replace $f$ by a real analytic function on an open subset $\O$ of $\R^n$ and $A$ by a compact $K\subset \O$ (\cite{L}, see \cite{Te} for an introduction).
This kind of inequality is called a  \L ojasiewicz inequality. We are interested to extend this \L ojasiewicz inequality to the case $A$ is  a two-dimensional local complete ring or excellent Henselian local ring. We have the following analogue of Theorem \ref{Gr}:

\begin{theorem}\cite{Ar69}\cite{Po} \label{SAP} 
Let $A$ be a complete  local ring whose maximal ideal is denoted by $\m$ and let $f(z):=(f_1(z),...,f_n(z))\in A[z]^n$. Then there exists a function $\b : \N \lgw\N$ such that:
$$\forall c\in\N, \ \forall \ovl{z}_1,..., \ovl{z}_m\in A \text{ s.t. } f(\ovl{z})\in\m^{\b(c)}$$
$$\exists  \wdt{z}_1,...,\wdt{z}_m\in A \text{ s.t. } f(\wdt{z})=0 \text{ and } \wdt{z}_i-\ovl{z}_i\in\m^c  \ 1\leq i\leq m.$$
 \end{theorem}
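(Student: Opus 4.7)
I would argue by contradiction using an ultraproduct construction together with the Artin Approximation Theorem for $A$. Suppose the conclusion fails, so there exists $c_0\in\N$ such that for every $N\in\N$ we can find $\ovl{z}^{(N)}\in A^m$ with $f(\ovl{z}^{(N)})\in\m^N$ but no $\wdt{z}\in A^m$ satisfying $f(\wdt{z})=0$ and $\wdt{z}-\ovl{z}^{(N)}\in\m^{c_0}A^m$.

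Fix a non-principal ultrafilter $\mathcal{U}$ on $\N$ and form the ultrapower $A^*:=A^{\N}/\mathcal{U}$, which contains $A$ via the diagonal embedding. Set $I^*:=\bigcap_{N}\m^N A^*$ and $\ovl{A}:=A^*/I^*$; the ring $\ovl{A}$ is $\m$-adically separated and still contains $A$. Let $z^*\in(A^*)^m$ denote the class of the sequence $(\ovl{z}^{(N)})_N$ and let $\ovl{z}$ be its image in $\ovl{A}^m$. By \L o\'s's theorem, for each fixed $N_0$ we have $f(\ovl{z}^{(N)})\in\m^{N_0}$ for $\mathcal{U}$-almost every $N$, so $f(z^*)\in\m^{N_0}A^*$; as this holds for every $N_0$, we conclude $f(z^*)\in I^*$, and therefore $f(\ovl{z})=0$ in $\ovl{A}$.

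To close the argument I would invoke the Artin Approximation Theorem in Popescu's form: since $A$ is a complete, hence excellent Henselian, local ring, any solution of a polynomial system over $A$ in a suitable regular extension can be approximated to arbitrary $\m$-adic order by a solution in $A$ itself. Applied to $\ovl{A}\supset A$ and the solution $\ovl{z}$, this produces $\wdt{z}\in A^m$ with $f(\wdt{z})=0$ and $\wdt{z}-\ovl{z}\in\m^{c_0}\ovl{A}^m$. Unwinding the ultraproduct yields $\wdt{z}-\ovl{z}^{(N)}\in\m^{c_0}A^m$ for $\mathcal{U}$-almost every $N$, contradicting our choice of the $\ovl{z}^{(N)}$.

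The main obstacle is precisely this last step: verifying that $A\to\ovl{A}$ is sufficiently well-behaved (i.e.\ a filtered colimit of smooth $A$-algebras, or at least geometrically regular) for Artin approximation to apply. This regularity property is the content of N\'eron--Popescu desingularization, which is the deep technical input behind the references cited in the statement and which I would treat as a black box.
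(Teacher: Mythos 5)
The paper does not prove Theorem~\ref{SAP}: it cites \cite{Ar69} and \cite{Po} and uses the statement as a black box (the surrounding text explicitly attributes it to Artin for Henselizations over a DVR and to Popescu in general). So there is no proof in the paper to compare against, and the relevant question is only whether your sketch is sound.

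Your ultraproduct strategy is a recognized route to the Strong Approximation Theorem (essentially the Becker--Denef--Lipshitz--van den Dries argument, later subsumed by N\'eron--Popescu desingularization), but the crucial approximation step is misapplied. First, the classical Artin Approximation Theorem, in the form ``solutions over $\widehat R$ are approximated by solutions over an excellent Henselian $R$,'' says nothing here, because $A$ is already complete. Second, and more seriously, the conclusion you ask for cannot hold in general: you want $\wdt z\in A^m$ with $f(\wdt z)=0$ and $\wdt z-\ovl z\in\m^{c_0}\ovl A^m$. Reducing modulo $\m^{c_0}$ and using that $\m^{c_0}$ is finitely generated so that $\ovl A/\m^{c_0}\ovl A\cong (A/\m^{c_0})^{\N}/\mathcal U$ is an ultrapower, the condition $\wdt z\equiv\ovl z \pmod{\m^{c_0}\ovl A^m}$ forces the class of $(\ovl z^{(N)}\bmod\m^{c_0})_N$ to lie in the diagonal image of $A/\m^{c_0}$, i.e.\ the sequence $\ovl z^{(N)}\bmod\m^{c_0}$ must be $\mathcal U$-almost constant. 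Nothing in the hypotheses gives you this, and when $A/\m^{c_0}$ is infinite (e.g.\ $\k$ infinite) you cannot arrange it by thinning the sequence.

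What you actually need is an exact solution $\wdt z^*\in(A^*)^m$, \emph{not} in $A^m$, with $\wdt z^*\equiv z^*\pmod{\m^{c_0}(A^*)^m}$; then \L o\'s gives $\wdt z^{(N)}\in A^m$ with $f(\wdt z^{(N)})=0$ and $\wdt z^{(N)}\equiv\ovl z^{(N)}\pmod{\m^{c_0}}$ for $\mathcal U$-almost all $N$, producing the contradiction. In other words the black box must be an approximation theorem for the pair $(A^*,\ovl A)$: the (non-Noetherian) Henselian local ring $A^*$ with maximal ideal $\m A^*$, whose $\m$-adic separated completion is $\ovl A$ (using $\aleph_1$-saturation of the ultrapower). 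Establishing that this pair has the approximation property is genuinely where the depth lies -- it uses Cohen structure theory to identify $\ovl A$ (for $A=\k\lb x\rb$ one gets $\ovl A\cong\K\lb x\rb$ with $\K$ the ultrapower of $\k$) and then N\'eron--Popescu desingularization or a Weierstrass-division argument \`a la Artin. Your instinct that this is the hard part is correct, but the object to which the theorem is applied must be $A^*$, not $A$, and that change is essential, not cosmetic.
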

 This theorem has been been proved by M. Artin in the case $A$ is the henselization of the ring of polynomials over a discrete valuation ring and by D. Popescu in the general case.
\begin{definition}
The least function $\b$ satisfying Theorem \ref{SAP} is called the Artin function of $f$. This is an increasing function that depends only on the ideal $I:=(f_1(z),...,f_n(z))$. See \cite{Ro06} for properties of this function.
\end{definition}
 
M. Artin raised the problem of finding estimates on the growth of Artin functions \cite{Ar70}. In general they are not bounded by affine functions as in Theorem \ref{Gr} (in \cite{Ro2} it is shown that the Artin function of $z_1^2-z_2^2z_3$ is not bounded by an affine function if $A=\k\lb x,y\rb$), thus there is no \L ojasiewicz inequality as in Theorem 1.1' in this context. But Artin's question remains widely open  in general. As Theorem \ref{Gr} is equivalent to Theorem 1.1', Theorem \ref{SAP} is equivalent to the following result:
\begin{thmb}
Let $A$ be a complete  local ring and let $f(z)\in A[z]^n$ such that $f^{-1}(0)\neq\emptyset$. Then there exists a increasing continuous function $\g :\R_{\geq 0}\lgw \R_{\geq 0}$  such that $\g(0)=0$ and 
$$||f(\ovl{z})||\geq \g\left(d(f^{-1}(0),\ovl{z})\right) \ \ \forall \ovl{z}\in A^m.$$
\end{thmb}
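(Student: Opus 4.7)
The plan is to mirror the proof of the equivalence between Theorem \ref{Gr} and Theorem 1.1' given above, with the Artin function $\beta$ playing the role of the affine bound and a general continuous increasing $\gamma$ playing the role of $t\mapsto Ct^{\alpha}$. As a preliminary observation I will record that since the norm takes values only in $\{0\}\cup\{e^{-n}:n\in\N\}$ and $f^{-1}(0)$ is closed for the Krull topology on $A^m$, whenever $d(f^{-1}(0),\overline{z})\leq e^{-c}$ the infimum is actually attained by some $\widetilde{z}\in f^{-1}(0)$ with $\widetilde{z}_j-\overline{z}_j\in\mathfrak{m}^c$ for each $j$.

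For the direction Theorem \ref{SAP} $\Rightarrow$ Theorem 1.2', let $\beta$ denote the Artin function of $f$; after replacing $\beta(c)$ by $\max(\beta(c),c)$ I may assume $\beta(c)\to\infty$. Given $\overline{z}\notin f^{-1}(0)$, write $d(f^{-1}(0),\overline{z})=e^{-c}$ with $c\in\N$. If one had $f(\overline{z})\in\mathfrak{m}^{\beta(c+1)}$, Theorem \ref{SAP} at level $c+1$ would produce a zero of $f$ at distance $\leq e^{-(c+1)}$ from $\overline{z}$, contradicting the definition of $c$; hence $\|f(\overline{z})\|\geq e^{-\beta(c+1)+1}$. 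I then set $\gamma(0):=0$, $\gamma(e^{-c}):=e^{-\beta(c+1)}$ for $c\in\N$, and extend $\gamma$ linearly on each interval $[e^{-c-1},e^{-c}]$ (and constantly on $[1,+\infty)$); monotonicity of $\gamma$ follows from that of $\beta$, and continuity at $0$ from $\beta(c+1)\to\infty$.

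For the converse, given $\gamma$ (which in the meaningful case satisfies $\gamma(t)>0$ for $t>0$), I define a candidate Artin function by $\beta(c):=\lceil-\log\gamma(e^{-c+1})\rceil+1$, a finite positive integer for each $c\geq 1$. If $f(\overline{z})\in\mathfrak{m}^{\beta(c)}$ then $\|f(\overline{z})\|\leq e^{-\beta(c)}<\gamma(e^{-c+1})$; the \L ojasiewicz inequality together with monotonicity of $\gamma$ forces $d(f^{-1}(0),\overline{z})<e^{-c+1}$; discreteness of the distance upgrades this to $d(f^{-1}(0),\overline{z})\leq e^{-c}$, and the preliminary observation produces the required $\widetilde{z}$ with $f(\widetilde{z})=0$ and $\widetilde{z}_j-\overline{z}_j\in\mathfrak{m}^c$.

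The only subtle point is arranging continuity of $\gamma$ at $0$: had $\beta$ happened to be bounded, naive piecewise-linear interpolation on $\{e^{-c}\}_{c\in\N}$ would fail to vanish at the origin. This is precisely why I insist at the outset on replacing $\beta$ by $\max(\beta(c),c)$, which is harmless since any function dominating an Artin function also satisfies Theorem \ref{SAP}. Once that is done, the rest of the argument is essentially the bookkeeping translation between the two formulations, exactly as in the proof of Theorem 1.1' given above.
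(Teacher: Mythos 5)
Your proposal is correct and takes essentially the same approach as the paper: your defining relation $\gamma(e^{-c}) := e^{-\beta(c+1)}$ is precisely the paper's $\gamma(t):=\exp\left(-\beta(-\ln t+1)\right)$ evaluated at $t=e^{-c}$, and the forward/backward arguments mirror the paper's reduction to the Greenberg/Theorem 1.1' equivalence with $ac+b$ replaced by $\beta(c)$. Your three additional touches --- the $\max(\beta(c),c)$ normalization to force $\beta\to\infty$, the piecewise-linear interpolation to obtain a genuine continuous increasing function on $\R_{\geq 0}$, and the preliminary observation that the infimum defining $d(f^{-1}(0),\ovl{z})$ is attained --- are exactly the details the paper leaves implicit in its one-line deferral, so this is the same proof, written out more carefully.
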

\begin{proof}
The proof is exactly the same as the proof that Theorem \ref{Gr} is equivalent to Theorem 1.1'. We have to replace $ac+b$ by $\b(c)$ and $\g(t):=\exp\left(-\b\left(-\ln t+1\right)\right)$ for any $t\in\ln^{-1}(\N)$. Since $\b$ may be chosen to be an increasing function, $\g$ is increasing and may be continuously extended to a continuous fnction on $\R_{\geq 0}$ . Moreover saying that $\g(0)=0$ is equivalent to say that $\b(c)$ goes to infinity as $c$ goes to infinity.

\end{proof}

The aim of this paper is to give an analogue to \L ojasiewicz  inequality when $A=\k\lb x,y\rb$ and char$(\k)=0$ (see Theorem \ref{main_theorem}). It asserts that the Artin function of $I$ is bounded by a linear function if the approximated solutions are not too close to the singular locus of $I$. This is a generalization of the main result of \cite{Ro10}, where a similar result is proven for binomials ideals. The proof is inspired by the proof of M. Artin of Theorem \ref{SAP} (see \cite{Ar69}): we use the Weierstrass division theorem in order to divide $f$ by a well chosen minor of  the Jacobian matrix of $f$ helping us to reduce the problem to the case of a system of equations with coefficients in $\k\lb x\rb$. Then we use an effective version of Theorem \ref{Gr} proven in \cite{Ro10}.  Finally we deduce from Theorem \ref{main_theorem} that the Artin function of an isolated singularity is bounded by a doubly exponential function (see Corollary \ref{de}).\\
\\
I would like to thank Michel Hickel for his comments about a previous version of this paper. I also thank the referee for its relevant comments and remarks that helped to improve the presentation.


\section{Notations}
Let $(A,\m)$ be a local ring. Let us denote by $\ord$ the $\m$-adic order on $A$,  i.e. $\ord(z):=\sup\{n\in\N\ /\ z\in\m^n\}$, where $\m$ is the maximal ideal of $A$. This order function defines a norm on $A$ as follows:
$$||z||:=e^{-\ord(z)},\ \ \forall z\in A.$$
This is an ultrametric norm, i.e. $||z+z'||\leq \max\{||z||,||z'||\}$ since $\ord(z+z')\geq \min\{\ord(z),\ord(z')\}$ for any $z$, $z'\in A$. Since $\ord(zz')\geq \ord(z)+\ord(z')$, we have $||zz'||\leq ||z||.||z'||$ for any $z$, $z'\in A$. We can extend this norm on $A^m$ by taking the maximum of the norms of the coordinates:
$$||z||:=\max_{1\leq i\leq m}||z_i||,\ \forall z:=(z_1,...,z_m)\in A^m.$$
  This norm defines a metric on $A^m$ as follows: $d(z,z'):=||z-z'||$ for any $z$, $z'\in A^m$.\\
If $a=(a_1,...,a_n)\in A^n$ and $c\in\N$, writing $a\in \m^c$ will mean $a_i\in\m^c$ for all $1\leq i\leq n$.


 \section{Jacobian ideal}
  \begin{definition}\label{Elk}\cite{Elk}
Let $A$ be a Noetherian ring and let $f_1$,..., $f_n\in A[z_1,...,z_m]$.  Let $E$ be a subset of $\lb 1,n\rb$ of cardinal $h$. We denote by $\Delta_E(f)$ the ideal of $A[z]$ generated by the $h\times h $ minors of the Jacobian matrix $\left(\frac{\partial f_i}{\partial z_j}\right)_{i\in E,  1\leq j\leq m}$ (This ideal is zero if $h>m$). We define the following ideal of $A[z]$:
  $$H_{f_1,...,f_n}:=\sum_{E}\Delta_E(f)((f_i,i\in E):I)$$
  where the sum runs over all subsets $E$ of $\lb 1,n\rb$.
  \end{definition}

    \begin{rmk}
Apparently this definition depends on the choice of the generators $f_1$,..., $f_n$ of $I$ and no details are given in \cite{Elk}. In most references using Elkik's definition nothing is said about the dependence of $H_{f_1,...,f_n}$ on the choice of the generators either it is just said that it is easy to check that it does not depend on this choice.\\
In fact, a prime ideal of $\frac{A[z]}{I}$ is in the smooth locus of the scheme $\Spec\left(\frac{A[z]}{I}\right)$ if and only if it does not contain $H_{f_1,...,f_n}\frac{A[z]}{I}$ (see for example Prop.  2.13 \cite{Sp} or Prop. 5.3 \cite{Po2}), i.e.  the smooth locus of $\Spec\left(\frac{A[z]}{I}\right)$  is $\Spec\left(\frac{A[z]}{I}\right)\backslash V\left(\sqrt{H_{f_1,...,f_n}}\frac{A[z]}{I}\right)$. In particular $\sqrt{H_{f_1,...,f_n}}\frac{A[z]}{I}$ does not depend on the presentation of the $A$-algebra $\frac{A[z]}{I}$. Another definition of an ideal containing $H_{f_1,...,f_n}$ whose support is the non-smooth locus of $\Spec\left(\frac{A[z]}{I}\right)$ and which is independent of the presentation of $\frac{A[z]}{I}$ over $A$ is given in \cite{G-R} (Definition 5.4.1).\\
Nevertheless in general  the image of $H_{f_1,...,f_n}$ in $\frac{A[z]}{I}$ depends on the generators $f_1$,..., $f_n$ as we can see in the following example:
   \end{rmk}
   
   \begin{ex}
    Set $A=\Q[x,y,z,t]$,
    $$f_1:=xz,\  f_2:=xt,\ f_3:=yz,\ f_4:=yt$$
    and let $I$ be the ideal of $A$ generated by $f_1$,...,$f_4$: $I=(x,y)\cap(z,t)$. The jacobian matrix of $f_1$,..,$f_4$ is
    $$M:=\left(\begin{array}{cccc} \frac{\partial f_1}{\partial x} &\frac{\partial f_2}{\partial x}& \frac{\partial f_3}{\partial x} &\frac{\partial f_4}{\partial x} \\
\frac{\partial f_1}{\partial y}&\frac{\partial f_2}{\partial y}&\frac{\partial f_3}{\partial y}&\frac{\partial f_4}{\partial y}\\
    \frac{\partial f_1}{\partial z}&\frac{\partial f_2}{\partial z}&\frac{\partial f_3}{\partial z}&\frac{\partial f_4}{\partial z}\\
   \frac{\partial f_1}{\partial t}&\frac{\partial f_2}{\partial t}&\frac{\partial f_3}{\partial t}&\frac{\partial f_4}{\partial t}\end{array}\right)=\left(\begin{array}{cccc} z & t & 0 &0\\
    0&0&z&t\\
    x&0&y&0\\
    0&x&0&y\end{array}\right).$$
    Then $\det(M)=xyzt-xyzt=0$. Let us compute $H_{f_1,...,f_4}$ modulo $I$:\\
    All the $3\times 3$ minors of $M$ are in $I$. Let us compute the $2\times 2$ minors of $M$ which are not in $I$. The only one  involving $f_1$ and $f_2$  is $x^2$, the only  one involving $f_3$ and $f_4$   is $y^2$. The  only one involving  $f_1$ and $f_3$ is $z^2$, the only one  involving $f_2$ and $f_4$ is $t^2$. Those involving $f_1$ and $f_4$ and 
    $f_2$ and $f_3$ are $xy$ and $zt$.    \\
    Now let us compute the ideals $((f_i,f_j):I)$ modulo $I$ for $1\leq i<j\leq 4$:
    $$((f_1,f_2):I)=(x) \text{ mod. } I$$
      $$((f_1,f_3):I)=(z) \text{ mod. } I$$
        $$((f_1,f_4):I)=((f_2,f_3):I)=(xy,zt) \text{ mod. } I$$
    $$((f_2,f_4):I)=(t) \text{ mod. } I$$
  $$((f_3,f_4):I)=(y) \text{ mod. } I$$
  Moreover the ideals $((f_i):I)=0$ modulo $I$ for any $1\leq i\leq 4$. Thus we obtain
  $$H_{f_1,...,f_4}=(x^3,y^3,z^3,t^3,(xy)^2,(zt)^2)\text{ modulo }I.$$  \\
    Now let us consider
    $$h_1:=x(z+t) ,\ h_2:=x(z-t) ,\ h_3:= yz,\ h_4= yt.$$
    These four elements generate $I$. The jacobian matrix of $h_1$,...,$h_4$ is
    $$N:=\left(\begin{array}{cccc} \frac{\partial h_1}{\partial x} &\frac{\partial h_2}{\partial x}& \frac{\partial h_3}{\partial x} &\frac{\partial h_4}{\partial x} \\
\frac{\partial h_1}{\partial y}&\frac{\partial h_2}{\partial y}&\frac{\partial h_3}{\partial y}&\frac{\partial h_4}{\partial y}\\
    \frac{\partial h_1}{\partial z}&\frac{\partial h_2}{\partial z}&\frac{\partial h_3}{\partial z}&\frac{\partial h_4}{\partial z}\\
   \frac{\partial h_1}{\partial t}&\frac{\partial h_2}{\partial t}&\frac{\partial h_3}{\partial t}&\frac{\partial h_4}{\partial t}\end{array}\right)=\left(\begin{array}{cccc} z+t & z-t & 0 &0\\
    0&0&z&t\\
    x&x&y&0\\
    x&-x&0&y\end{array}\right).$$
    Let us now compute $H_{h_1,...,h_4}$ modulo $I$:\\
    As before $\det(N)=0$ and all the $3\times 3$ minors of $N$ are in $I$. Let us compute the $2\times 2$ minors of $N$ which are not in $I$. The only one involving $h_1$ and $h_2$ is $x^2$. The only one involving $h_3$ and $h_4$ is $y^2$. The only ones involving $h_1$ and $h_3$ are $xy$ and $z(z+t)$. Those involving $h_2$ and $h_4$ are $xy$ and $t(z-t)$. Those involving $h_1$ and $h_4$ are $xy$ and $t(z+t)$ and those involving $h_2$ and $h_3$ are  $xy$ and $z(z-t)$.    \\
     Now let us compute the ideals $((h_i,h_j):I)$ modulo $I$ for $1\leq i<j\leq 4$:
    $$((h_1,h_2):I)=(x) \text{ mod. } I$$
      $$((h_1,h_3):I)=(xy,z(z+t)) \text{ mod. } I$$
        $$((h_1,h_4):I)=(xy,t(z+t)) \text{ mod. } I$$
     $$((h_2,h_3):I)=(xy,z(z-t)) \text{mod. } I$$
    $$((h_2,h_4):I)=(xy,t(z-t)) \text{ mod. } I$$
  $$((h_3,h_4):I)=(y) \text{ mod. } I$$
  Moreover  $((h_i):I)=0$ modulo $I$ for any $1\leq i\leq 4$. Thus we obtain
  $$H_{h_1,...,h_4}=(x^3,y^3,(xy)^2, z^2(z+t)^2, t^2(z+t)^2, z^2(z-t)^2, t^2(z-t)^2)\text{ modulo }I.$$  
  Clearly $H_{h_1,...,h_4}\subset H_{f_1,...,f_4}$ modulo $I$. On the other hand $z^3\in H_{f_1,...,f_4}+I$.  If $z^3\in H_{h_1,...,h_4}+I$ then $z^3\in H_{h_1,...,h_4}+I$ modulo $(x,y,t)$. But 
  $$H_{h_1,...,h_4}+I=(z^4)\text{ mod. } (x,y,t)$$
  and $z^3\notin (z^4)$. Thus $z^3\notin H_{h_1,...,h_4}+I$ and $H_{f_1,...,f_4}\neq H_{h_1,...,h_4}$ modulo $I$.\\
  \\
 In fact we can show more: let us denote by $\ovl{J}$ the integral closure of an ideal $J$. Since $H_{h_1,...,h_4}+I\subset H_{f_1,...,f_4}+I$, we have
 $\ovl{H_{h_1,...,h_4}+I}\subset \ovl{H_{f_1,...,f_4}+I}$. But since $\ovl{(z^k)}=(z^k)$ for any integer $k$, we see that 
 $$\ovl{H_{f_1,...,f_4}+I}\subsetneq \ovl{H_{h_1,...,h_4}+I}.$$   
      \end{ex}
    
         We finish this section by giving some effective bounds on $H_{f_1,...,f_n}$ that we need in the proof of Theorem \ref{main_theorem}.
    \begin{lemma}\label{bound}
    Let $I$ be an ideal of $\k\lb x,y\rb[z_1,...,z_m]$, where $x$ and $y$ are single variables, generated by polynomials $f_1$,..., $f_n$ of degree $\leq d$. Then $H_{f_1,...,f_n}$ is generated by polynomials of degree $\leq (m+2)((d+m+2)^{m+2}d)^{2^{m+1}}+(m+2)(d-1)$.
    \end{lemma}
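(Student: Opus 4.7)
The plan is to bound separately the degrees of generators of the two factors appearing in each summand of
$$H_{f_1,\dots,f_n}=\sum_E\Delta_E(f)\cdot\bigl((f_i:i\in E):I\bigr),$$
namely the Jacobian-minor ideal $\Delta_E(f)$ on the one hand and the colon ideal $J_E:I$ with $J_E:=(f_i:i\in E)$ on the other. Since a generator of a product of two ideals has degree bounded by the sum of the degree bounds for its factors, these two estimates combine additively, matching the shape of the target bound.

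For the minor ideal, each entry $\partial f_i/\partial z_j$ of the Jacobian matrix has degree at most $d-1$ in $z$, and the matrix has $m$ columns, so every $h\times h$ minor ($h\le m$) is an alternating sum of $h!$ products of $h$ such entries and has degree at most $h(d-1)\le(m+2)(d-1)$. This accounts for the additive term $(m+2)(d-1)$ in the claimed bound.

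The substantial estimate is the colon ideal. Writing $J_E:I=\bigcap_{j\notin E}(J_E:f_j)$ reduces it to bounding each $J_E:f_j$; a polynomial $g$ lies in $J_E:f_j$ precisely when it is the first component of a syzygy of the sequence $(f_j,(f_i)_{i\in E})$. I would then apply an effective syzygy/colon bound of Hermann--Seidenberg type: in a polynomial ring in $N$ variables over a field, polynomials of degree $\le\delta$ have syzygies generated in degree $\le\delta^{2^{cN}}$ for an explicit constant $c$. To use this in $\k\lb x,y\rb[z_1,\dots,z_m]$ one either treats $x,y$ as parameters over the field $\k(x,y)$ or clears denominators and works in the polynomial ring $\k[x,y,z_1,\dots,z_m]$ in $m+2$ variables; in either case the degree-in-$z$ bound transfers back to $\k\lb x,y\rb[z]$ by flat base change along $\k[x,y]\to\k\lb x,y\rb$. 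With $N=m+2$ and $\delta\le d+m+2$ (the extra $m+2$ coming from homogenization or from the Bezout-style bookkeeping inside the Hermann argument), this yields generators of $J_E:f_j$ of degree at most $((d+m+2)^{m+2}d)^{2^{m+1}}$, and the intersection over the at most $n$ values $j\notin E$, together with the summation over subsets $E$, is absorbed into the leading factor $(m+2)$.

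The main obstacle is genuinely this colon-ideal step: one must check that the doubly-exponential Hermann--Seidenberg shape, stated in the literature over a field, goes through in the power-series-coefficient setting without inflating the degree in $z$, and that intersecting the $(J_E:f_j)$ does not worsen the bound beyond the constant factor absorbed in the prefactor. The reason $\k\lb x,y\rb[z]$ effectively behaves as a polynomial ring in $m+2$ variables is that its Krull dimension is $m+2$ and the colon operation can be carried out degree-by-degree in $z$, each slice being a linear-algebra problem whose size is controlled by a Bezout-type estimate $(d+m+2)^{m+2}$ in $m+2$ variables; iterating this to build the nested syzygies underlying a colon ideal is what introduces the $2^{m+1}$ tower in the exponent of the claimed bound.
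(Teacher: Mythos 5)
Your overall strategy coincides with the paper's: write $H_{f_1,\dots,f_n}$ as a sum of products $\Delta_E(f)\cdot((f_i,i\in E):I)$, bound the degree of generators of each factor, and add, using $h\le m+2$ to cover the minor term by $(m+2)(d-1)$. Where the two proofs part ways is the colon-ideal bound: the paper does not prove anything here, it simply cites the explicit bound $(m+2)((d+m+2)^{m+2}d)^{2^{m+1}}$ from item 56 of Seidenberg's \emph{Constructions in Algebra} \cite{Se}, which is already formulated so as to apply over base rings such as $\k\lb x,y\rb$. You instead try to reconstruct a bound of this shape from Hermann--Seidenberg syzygy estimates, and this reconstruction has genuine gaps.

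Concretely, three points do not hold up. First, passing through $J_E:I=\bigcap_{j\notin E}(J_E:f_j)$ introduces an intersection, and generators of an intersection of ideals can sit in strictly higher degree than generators of the pieces; you would need a separate effective bound for the intersection, which you do not supply. Second, you claim the prefactor $(m+2)$ ``absorbs'' both this intersection and the sum over subsets $E$. The sum over $E$ costs nothing in degree (taking the union of generating sets does not raise degrees), while the intersection certainly can cost something, so neither justification is right; in Seidenberg's formula the $(m+2)$ is already part of the bound for a single colon computation in a ring whose relevant dimension count is $m+2$, not a fudge factor for combining several computations. Third, the base-change step is not clean: the coefficients of the $f_i$ live in $\k\lb x,y\rb$, not in $\k[x,y]$ or $\k(x,y)$, so ``clearing denominators'' is not available, and passing to $\k(x,y)[z]$ changes both the ring and the notion of degree and does not obviously return a generating set over $\k\lb x,y\rb$. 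What is actually needed is a version of the Hermann--Seidenberg colon bound valid over a Noetherian base such as a power series ring, which is exactly what Seidenberg's paper provides and what the paper invokes.
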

    
    \begin{proof}
    The ideal $H_{f_1,...,f_n}$ is generated by the products of one generator of the ideal $((f_i,i\in E):I)$ and of one generator of $\D_E(f)$. If the cardinal of $E$ equals $h$, then $\D_E(f)$ is generated by polynomials of degree $\leq h(d-1)$. Moreover $((f_i,i\in E):I)$ is generated by polynomials of degree $\leq  (m+2)((d+m+2)^{m+2}d)^{2^{m+1}}$ (cf. 56 \cite{Se}). Since $h\leq m+2$ this proves the lemma.
    
        \end{proof}
        
        \begin{corollary}\label{cor}
Let $I$ be an ideal of $\k\lb x,y\rb[z_1,...,z_m]$ generated by polynomials $f_1$,..., $f_n$ of degree $\leq d$. Let $H$ be any ideal of $\k\lb x,y\rb[z]$ such that $\sqrt{H+I}=\sqrt{H_{f_1,...,f_n}+I}$. Then we have
$$(H+I)^e\subset H_{f_1,...,f_n}+I$$
where 
$$e:=\left((m+2)((d+m+2)^{m+2}d)^{2^{m+1}}+(m+2)(d-1)\right)^{\min\{n,m+1\}}.$$
        
        \end{corollary}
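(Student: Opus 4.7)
The plan is to combine the degree bound from Lemma \ref{bound} with a Kollár--Jelonek type effective Nullstellensatz. Since $\sqrt{H+I}=\sqrt{H_{f_1,\ldots,f_n}+I}$, we have $H+I\subset\sqrt{H_{f_1,\ldots,f_n}+I}$, and consequently $(H+I)^{e}\subset(\sqrt{H_{f_1,\ldots,f_n}+I})^{e}$. It therefore suffices to establish the uniform containment
$$(\sqrt{J})^{e}\subset J,\qquad J:=H_{f_1,\ldots,f_n}+I,$$
for the value of $e$ given in the statement.

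First, by Lemma \ref{bound}, the ideal $H_{f_1,\ldots,f_n}$ is generated in $\k\lb x,y\rb[z_1,\ldots,z_m]$ by polynomials in $z$ of degree at most
$$D:=(m+2)((d+m+2)^{m+2}d)^{2^{m+1}}+(m+2)(d-1).$$
Since $\deg f_i\leq d\leq D$, the ideal $J$ itself admits a generating system by elements of degree $\leq D$. This is the only place where the precise quantitative input of Lemma \ref{bound} is used.

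The remaining step is to apply an effective Nullstellensatz to $J$: for an ideal generated by finitely many polynomials of degree $\leq D$, a Kollár--Jelonek-type bound gives $(\sqrt J)^{D^{k}}\subset J$, where $k$ is controlled by the minimum of the number of generators and the number of variables. In the present situation, the colon-ideal structure built into the definition of $H_{f_1,\ldots,f_n}$ effectively reduces the count of generators to $n$, while the count of variables is $m+1$ (the $z_i$'s, together with one effective variable contributed by the two-dimensional coefficient ring $\k\lb x,y\rb$); this produces the exponent $e=D^{\min\{n,m+1\}}$, which is exactly the one claimed.

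The principal obstacle is the legitimacy of invoking the effective Nullstellensatz over the coefficient ring $\k\lb x,y\rb$ rather than a field. Classical formulations assume polynomial ideals over a field, and a naive passage to the fraction field $\Frac(\k\lb x,y\rb)$ a priori yields membership only up to clearing denominators. The cleanest remedy is to adapt the proof of the Kollár bound to a polynomial ring over a Noetherian local ring --- exploiting in particular the effective control on colon ideals furnished by Seidenberg's bounds (the same source already used in Lemma \ref{bound}) --- and to thereby pin down the specific exponent $\min\{n,m+1\}$ appearing in the statement.
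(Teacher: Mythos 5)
Your reduction is the same as the paper's: use Lemma \ref{bound} to bound the degrees of the generators of $J:=H_{f_1,\ldots,f_n}+I$ by $D$, then invoke an effective Nullstellensatz-type statement $(\sqrt J)^{D^{\min\{n,m+1\}}}\subset J$. However, where the paper simply cites Th\'eor\`eme~1 of Teissier \cite{Te1}, which is stated so as to apply directly to ideals of $\k\lb x,y\rb[z_1,\ldots,z_m]$ generated by $n$ polynomials of degree $\leq d$, you instead flag the passage from a field to the coefficient ring $\k\lb x,y\rb$ as ``the principal obstacle'' and propose to resolve it by ``adapt[ing] the proof of the Koll\'ar bound to a polynomial ring over a Noetherian local ring.'' You then do not carry out that adaptation, so the crucial step is left as an unproved claim rather than a citation or an argument. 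This is a genuine gap: the result you need is precisely what Teissier's survey already records, and without either citing it or actually performing the adaptation you describe, the proof is incomplete.

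Two further points. First, your heuristics for the exponent $\min\{n,m+1\}$ --- that ``the colon-ideal structure built into the definition of $H_{f_1,\ldots,f_n}$ effectively reduces the count of generators to $n$,'' and that the base ring $\k\lb x,y\rb$ contributes ``one effective variable'' to reach $m+1$ --- are speculative and not justified; in the paper this exponent is read off directly from the statement of Teissier's theorem, not derived by such bookkeeping. Second, the remark you make in passing that $\deg f_i\leq d\leq D$ (so that $J$ is generated in degree $\leq D$) is correct and needed, and is the right use of Lemma \ref{bound}; that part of your argument is fine.
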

\begin{proof}
By Th\'eor\`eme 1 \cite{Te1} we have 
$$\sqrt{J}^{d^{\min\{n,m+1\}}}\subset J$$
 for any ideal $J$ of  $\k\lb x,y\rb[z_1,...,z_m]$ generated by $n$ polynomials of degree $\leq d$. We apply this to the ideal $J:=H_{f_1,...,f_n}+I$ using Lemma \ref{bound}.
\end{proof}

\begin{rmk}
By Proposition 2.13 \cite{Sp} or Proposition 5.3 \cite{Po2}, we can choose $H:=\sqrt{H_{f_1,...,f_n}}$ or
$$H:=\sum_{g}\sum_{E}\Delta_E(g)((g_i,i\in E):I)$$
where the first sum runs over all the sets of generators $g_1,...,g_s$ of $I$ and the second sum runs over all subsets $E$ of $\lb 1,s\rb$.\end{rmk}

\begin{rmk}\label{borne}
We claim that there exists a constant $C>1$ such that for all $d\geq 2$ and all $m\geq 1$, $e\leq d^{C^m}$.\\
Indeed, for all $d\geq 2$ and $m\geq 1$, we have 
$$e\leq (2(m+2)(d+m+2)^{m+2}d)^{2^{m+1}m}\leq (d+m+2)^{2^{m+1}m(m+5)}\leq (d+m+2)^{2^{3(m+1)}}$$
$$\leq (d+m+2)^{64^{m}}.$$
But 
$$\log\left(d+m+2\right)\leq C'^{m}\log(d)$$
for all $d\geq 2$ and $m\geq 1$ and a well chosen constant $C'>0$. Thus we set $C:=64C'$ and the claim is proven.
\end{rmk}


    \section{\L ojasiewicz inequality with respect to the Krull topology}
    
  \begin{definition}
  Let $I$ be an ideal of $A[z]$ and let $\ovl{z}\in A^m$. We say $I(\ovl{z})\in \m^{\b}$ if and only if $g(\ovl{z})\in \m^{\b}$ for all $g\in I$.\\
  The set $I^{-1}(0)$ is defined as
  $$I^{-1}(0):=\{\ovl{z}\in A^m\ / \ g(\ovl{z})=0\ \ \forall g\in I\}.$$
  \end{definition}
  Let us recall the following result that we will used in the proof of Theorem \ref{main_theorem}:
  
  \begin{theorem}\cite{Ro10}\label{1var}
For all $m,\,d\in\N$, there exists $a(m,d)\in \Z$  such that for any $f=(f_1,...,f_n)\in\k[x,z]^n$, with $z=(z_1,...,z_m)$ and $x$ a single variable, such that the total degree of $f_i$ is less or equal to $d$ for  $1\leq i\leq n$, for all $c\in\N$ and for all $z(x)\in\k[[x]]^m$ such that $f(x,z(x))\in (x)^{a(m,d)(c+1)}$, there exists $\ovl{z}(x)\in\k[[x]]^m$ such that $f(x,\ovl{z}(x))=0$ and $z(x)-\ovl{z}(x)\in (x)^c$.\\
Moreover the function $(m,d)\lgm a(m,d)$ is a polynomial function with respect to  $d$ whose degree is exponential in $m$.

\end{theorem}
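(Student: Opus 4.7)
The plan is to follow the classical approach of Artin and Greenberg used in Theorem \ref{Gr}, applied to the one-variable base ring $A=\k[[x]]$, while tracking every constant so that it depends only on the pair $(m,d)$. The quantitative bookkeeping is made possible by Lemma \ref{bound} and Corollary \ref{cor}, which control the Elkik ideal $H_{f_1,\ldots,f_n}$ uniformly in the number of variables and in the degree.

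I would induct on the Krull dimension of the scheme cut out by $f$ in $\Spec(\k[[x]][z])$. Given an approximate solution $z(x)\in\k[[x]]^m$ with $f(x,z(x))\in(x)^N$, I inspect the orders $\ord_x(g(x,z(x)))$ as $g$ runs over a generating set of $H_{f_1,\ldots,f_n}$ whose degrees are bounded by Lemma \ref{bound}; call $\nu$ the minimum of these orders. If $\nu$ is small compared to $N$, a Newton iteration in the DVR $\k[[x]]$ lifts $z(x)$ to an honest zero $\ovl{z}(x)$ of a suitable subsystem $f_E$, with $\ovl{z}-z\in(x)^{N-\nu}$ as soon as $N\geq 2\nu+1$; the multiplier appearing in the definition of $H_{f_1,\ldots,f_n}$ then forces $\ovl{z}$ to kill the whole ideal $I$. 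If, on the contrary, every generator of $H_{f_1,\ldots,f_n}$ has order exceeding the chosen threshold at $z(x)$, I adjoin these generators to $f$, producing a new polynomial system $f'$ whose degrees are still controlled by Lemma \ref{bound} and whose zero locus is strictly contained in that of $f$. The power series $z(x)$ remains an approximate solution of $f'$ to essentially the same accuracy, and the induction hypothesis applied to $f'$ delivers the desired true solution.

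The quantitative statement then follows by composing the two recursions. The induction depth is bounded by the Krull dimension $m+1$, and by Remark \ref{borne} each step replaces the current degree $d$ by a polynomial in $d$ whose degree is at most $d^{C^m}$. Iterating these substitutions yields an $a(m,d)$ that is polynomial in $d$, of degree singly exponential in $m$, which matches the form announced in the statement.

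The main obstacle I anticipate is ensuring that the degree blow-up in the singular step does not outpace the decrease in dimension: after adjoining the Elkik generators the new degree is already a tower in $d$, and a priori each further stratification could double the tower height. Corollary \ref{cor} is precisely the effective Nullstellensatz estimate needed to keep this growth under control, and the crucial point is that the induction depth is capped by the dimension rather than by an iterated quantity, which is what preserves the bound in the asserted class. By contrast, the effective Hensel/Newton step in the DVR $\k[[x]]$ is a direct calculation and is not the bottleneck.
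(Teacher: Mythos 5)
The statement you are proving is not proved in this paper at all: Theorem~\ref{1var} is a black-box citation to \cite{Ro10}, and the present paper uses it as input to the proof of Theorem~\ref{main_theorem}. So there is no ``paper's own proof'' to compare against; I will evaluate your sketch on its own terms.

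Your qualitative strategy -- stratify by the non-smooth locus using $H_{f_1,\dots,f_n}$, lift through the smooth stratum by a Newton/Tougeron step in the DVR $\k\lb x\rb$, and recurse on the singular stratum -- is the standard Artin--Greenberg scheme and is a sensible framework. But the quantitative conclusion does not follow from the bookkeeping you describe, and I think this is a genuine gap, not just an omitted detail. Lemma~\ref{bound} gives $H_{f_1,\dots,f_n}$ generators of degree roughly $d^{(m+3)\,2^{m+1}}$, i.e.\ the exponent already grows like $2^{m}$. Writing $D(m)$ for this per-step exponent multiplier, one pass of your recursion replaces $d$ by $d^{D(m)}$; after the $m{+}1$ passes allowed by the dimension cap you get $d^{D(m)^{m+1}}$, and $D(m)^{m+1}$ is of size $2^{O(m^2)}$, which is \emph{super}-exponential in $m$, not exponential as the statement asserts. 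The remark at the end of your sketch -- that bounding the induction depth by the dimension ``preserves the bound in the asserted class'' -- is exactly where the argument breaks: bounding the depth by $m{+}1$ does not help when the per-step cost is itself exponential in $m$. Corollary~\ref{cor} does not rescue this either; it relates two defining ideals of the singular locus of a \emph{fixed} system, and says nothing that would collapse the iterated tower.

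Two smaller but real issues. First, Lemma~\ref{bound} and Corollary~\ref{cor} as written are for $\k\lb x,y\rb[z]$; you need the one-variable analogues for $\k\lb x\rb[z]$ (they hold, but the constants are different and you should not cite the two-variable statements verbatim). Second, your induction step assumes the new zero locus $V(f')$ is strictly smaller than $V(f)$; this fails if $H_{f_1,\dots,f_n}$ vanishes identically on $V(I)$, which can happen for non-reduced or otherwise non-generically-smooth $I$. Handling this correctly (radical or primary decomposition over $\k\lb x\rb[z]$ with effective degree control) adds a further, non-trivial source of degree growth that your sketch does not account for. So the shape of the argument is right, but as presented it would only yield a weaker (roughly doubly exponential in $m$) bound, and it does not recover the ``polynomial in $d$ of degree exponential in $m$'' assertion of Theorem~\ref{1var}.
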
  

Then we can state our main theorem:
  
  \begin{theorem}\label{main_theorem}
  Let $A:=\k\lb x,y\rb$,  $x$ and $y$ being single variables, and $\k$ be an infinite field. Then there exist constants $K_1$, $K_2$, $K_3>0$  such that for any $d\geq2$ and any $m\geq 1$, 
for any ideal $I=(f_1,...,f_n)$  of $\k[x,y,z]$ generated by polynomials of degrees less than $d$ such that $f^{-1}(0)\neq \emptyset$, where $z:=(z_1,...,z_m)$, we have the following inequalities:\\
  \begin{equation}\label{eq1}\left|\left|f(\ovl{z})\right|\right|\geq (K_1d(\ovl{z},f^{-1}(0)))^{d^{\left(\frac{1}{\left|\left|H_{f_1,...,f_n}(\ovl{z})\right|\right|}\right)^{K_2m}}}\ \ \forall \ovl{z}\in A^m\backslash H_{f_1,...,f_n}^{-1}(0)\end{equation}
  \begin{equation}\label{eq2} \left|\left|f(\ovl{z})\right|\right|\geq (K_1d(\ovl{z},f^{-1}(0)))^{d^{\left(\frac{1}{\left|\left|H(\ovl{z})\right|\right|}\right)^{d^{K_3^m}}}}\ \ \forall \ovl{z}\in A^m\backslash H^{-1}(0)\end{equation}
  where $H$ is any ideal of $A[z]$ such that $\sqrt{H+I}=\sqrt{H_{f_1,...,f_n}+I}$.
  \end{theorem}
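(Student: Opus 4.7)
The plan is to follow the strategy used by M.~Artin to establish Theorem~\ref{SAP}, keeping precise quantitative control at every step. Inequality \eqref{eq1} is the substantive statement; inequality \eqref{eq2} will then be extracted from it by invoking Corollary~\ref{cor} and Remark~\ref{borne}.

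First, given $\ovl{z}\in A^m\setminus H_{f_1,\dots,f_n}^{-1}(0)$, I would pick among the generators of $H_{f_1,\dots,f_n}$ a product $\delta\cdot g$ whose order at $\ovl{z}$ is minimal: here $\delta$ is an $h\times h$ minor of the Jacobian matrix of a subfamily $(f_i)_{i\in E}$ with $|E|=h\leq m$, and $g\in((f_i,i\in E):I)$. By construction $\ord(\delta(\ovl{z})g(\ovl{z}))=\tau$, where $\tau:=\ord(H_{f_1,\dots,f_n}(\ovl{z}))$. Since $\k$ is infinite, a generic $\k$-linear change of the variables $(x,y)$ puts $\delta(\ovl{z})g(\ovl{z})$ in Weierstrass position with respect to $y$, so Weierstrass preparation produces $\delta(\ovl{z})g(\ovl{z})=u\cdot P(x,y)$ with $u\in A^{\times}$ and $P\in\k\lb x\rb[y]$ distinguished of degree $s=\tau$.

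Next, I would substitute $z_j=\ovl{z}_j+w_j$ and, for every $j$, apply Weierstrass division of $w_j$ by $P$:
\[w_j=\sum_{k=0}^{s-1}w_{j,k}(x)\,y^k+P(x,y)\,\rho_j(x,y),\]
introducing new unknowns $w_{j,k}\in\k\lb x\rb$ and $\rho_j\in A$. Because $g$ lies in the colon ideal $((f_i,i\in E):I)$, the task of finding an approximate solution of $f=0$ reduces to that of finding an approximate solution of the subsystem $(f_i)_{i\in E}=0$, up to an error controlled by $\ord(g(\ovl{z}))$. The invertibility of $\delta$ modulo $P$ then permits a Tougeron-style implicit function argument inside $\k\lb x,y\rb$: using Cramer's rule applied to the Taylor expansion of $f_i(\ovl{z}+w)$ for $i\in E$, one solves for $h$ of the $w_j$'s together with all residual functions $\rho_j$ as formal series in the remaining $w_{j,k}$'s modulo a suitable power of $\m$. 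Substituting back and matching coefficients with respect to $y$ converts $f_i=0$ into a polynomial system $F(x,(w_{j,k}))=0$ over $\k\lb x\rb$ in $O(sm)$ unknowns, whose total degree is polynomial in $d$ and $s$. Applying Theorem~\ref{1var} then yields, whenever the initial approximation error is of $x$-adic order at least $a\bigl(O(sm),\text{poly}(d,s)\bigr)(c+1)$, an exact solution of $F=0$ matching the truncation of $\ovl{z}$ modulo $(x)^c$; because $a(m',d')$ is polynomial in $d'$ of degree exponential in $m'$, this threshold is dominated by $d^{C_0^{sm}}$ for a universal constant $C_0>0$. Substituting $s=\tau=\log\|H_{f_1,\dots,f_n}(\ovl{z})\|^{-1}$ produces the doubly exponential exponent $d^{(1/\|H_{f_1,\dots,f_n}(\ovl{z})\|)^{K_2 m}}$, and translating the resulting Artin-function bound into a \L ojasiewicz inequality exactly as in the proofs of Theorems~1.1' and~1.2' gives \eqref{eq1} with a suitable $K_1>0$.

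The main obstacle lies in the second step: keeping the Weierstrass divisions, the reduction to the subsystem $(f_i)_{i\in E}$ via the colon element $g$, and the Cramer-style elimination via the minor $\delta$ simultaneously under enough quantitative control that the resulting polynomial system over $\k\lb x\rb$ has degree polynomial in $d$ and $s$ in only $O(sm)$ unknowns, so that the single parameter $\tau$ really does absorb every loss. Inequality \eqref{eq2} is then deduced from \eqref{eq1} by combining the inclusion $(H+I)^e\subset H_{f_1,\dots,f_n}+I$ of Corollary~\ref{cor} with the bound $e\leq d^{C^m}$ of Remark~\ref{borne}: specializing at $\ovl{z}$ and distinguishing the regimes $\|f(\ovl{z})\|\leq\|H(\ovl{z})\|$ and $\|f(\ovl{z})\|>\|H(\ovl{z})\|$ yields $\ord(H_{f_1,\dots,f_n}(\ovl{z}))\leq d^{C^m}\ord(H(\ovl{z}))$ outside of the trivial regime in which $\|f(\ovl{z})\|$ is already bounded below independently of $\ovl{z}$, so that the exponent appearing in \eqref{eq1} is absorbed by the larger exponent in \eqref{eq2} for $K_3$ sufficiently large relative to $C$.
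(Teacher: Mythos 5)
Your overall blueprint — Weierstrass preparation, reduction to a one-variable system via coefficient-matching in $y$, Theorem~\ref{1var}, and a Tougeron-style finish — is the same as the paper's, but several of the details you propose do not work, and you candidly flag that you have not resolved the main difficulty.

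The most serious error is the claim that ``the invertibility of $\delta$ modulo $P$ then permits a Tougeron-style implicit function argument.'' You define $P$ to be the distinguished polynomial obtained by Weierstrass-preparing $\delta(\ovl{z})g(\ovl{z})$, so $\delta(\ovl{z})g(\ovl{z})=uP$ with $u$ a unit. Since $\delta(\ovl{z})$ is not a unit (otherwise there is nothing to do), $\delta(\ovl{z})$ is a non-unit factor of $uP$ and therefore a \emph{zero divisor} in $\k\lb x,y\rb/(P)$, exactly the opposite of invertible. Consequently the Cramer-style elimination you describe cannot be carried out over $\k\lb x,y\rb$, and your ``second step'' collapses. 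The correct order of operations, which is what the paper does, is: first reduce modulo the Weierstrass polynomial to get a polynomial system over $\k\lb x\rb$, solve that system \emph{exactly} by Theorem~\ref{1var} to obtain $\ovl{\ovl{z}}$, and only \emph{then} apply Tougeron's theorem (Theorem~\ref{To}). Tougeron's hypothesis $f_i(\ovl{\ovl{z}})\in(\delta(\ovl{\ovl{z}}))^2\m^c$ is not available at the start (the bound $\ord f(\ovl{z})\geq\gamma$ gives no divisibility of $f(\ovl{z})$ by $\delta^2(\ovl{z})$); it only becomes available after the one-variable solve.

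Two further points. First, the paper prepares $\delta^2(\ovl{z})$, not $\delta(\ovl{z})g(\ovl{z})$: this choice is what makes Tougeron's hypothesis appear. Writing $\delta^2(\ovl{z})=\ovl{u}\,\ovl{a}$ and reducing everything modulo $\ovl{a}$, the congruence $\delta^2(z^*)\equiv 0\pmod{\ovl{a}}$ becomes one of the one-variable polynomial equations $G_l=0$ that are solved exactly. After the solve, $\delta^2(\ovl{\ovl{z}})$ is a unit times $\ovl{\ovl{a}}$, and $f_k(\ovl{\ovl{z}})\equiv0\pmod{\ovl{\ovl{a}}}=\pmod{\delta^2(\ovl{\ovl{z}})}$ follows from $F_{k,l}=0$; combined with the original order bound, $f_k(\ovl{\ovl{z}})\in\delta^2(\ovl{\ovl{z}})\m^c$. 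Preparing $\delta g$ does not produce this structure. Second, in your coefficient-matching step you fix $P$ and divide the unknowns $w_j$ by it; the paper instead introduces the \emph{coefficients of the Weierstrass polynomial as new unknowns} $a_1,\dots,a_r$ in the one-variable system (via the generic monic $A(a,y)$). This is essential: the exact solution $\ovl{\ovl{z}}$ produced by Theorem~\ref{1var} corresponds to a (slightly different) Weierstrass polynomial $\ovl{\ovl{a}}$, and one cannot work with the fixed $P$ throughout. Your deduction of \eqref{eq2} from \eqref{eq1} via Corollary~\ref{cor} and Remark~\ref{borne} is essentially the paper's argument and is fine.
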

  
\begin{rmk}\label{rmk} 
  Both inequalities show that we have a \L ojasiewicz inequality as in Theorem 1.1' if we consider elements $\ovl{z}$ whose contact order with the singular locus of $X:=\Spec\left(\frac{A[z]}{I}\right)$ (i.e. $\ord(H(\ovl{z}))$ where $H$ is an ideal defining the singular locus of $X$)  is bounded.
    \end{rmk}
    
    \begin{rmk}
    In Theorem \ref{main_theorem}, both inequalities are valid only when $\ovl{z}\notin H^{-1}(0)$. In general we can do the following:\\
    Let $e$ be an integer such that $\sqrt{I}^e\subset I$. Let $f_1$,..., $f_n$ be generators of $I$ and $g_1$,..., $g_l$ be generators of $\sqrt{I}$. For any $\ovl{z}\in A^n$ and for any $i$ we have $g_i^e(\ovl{z})\in (f_1(\ovl{z}),...,f_n(\ovl{z}))$. Thus for any $i$, there exist $a_{i,1}$,..., $a_{i,n}\in A$ such that 
    $$g_i^e(\ovl{z})=a_{i,1}f_1(\ovl{z})+\cdots+a_{i,n}f_n(\ovl{z}).$$
    Hence $\ord(g_i^e(\ovl{z}))\geq \min_j\ord(f_j(\ovl{z}))$ and  $||g(\ovl{z})||^e\leq ||f(\ovl{z})||$. Since $f^{-1}(0)=g^{-1}(0)$ we see that if  $g_1$,..., $g_l$ satisfy Inequality (\ref{eq1}) or (\ref{eq2}) then $f_1$,..., $f_n$ satisfy the same kind of inequality where $d(f^{-1}(0),\ovl{z})$ is relaced by $d(f^{-1}(0),\ovl{z})^e$.\\
  In particular if  $I$ is a radical ideal and if we set $X:=\Spec\left(\frac{A}{I}\right)$, then $H$ defines the singular locus of $X$ denoted by Sing$(X)$ which is a proper closed subset of $X$. Then we have a natural stratification of $X$ where the first stratum is Reg$(X)$, the regular locus of $X$, the second one is Reg(Sing$(X))$, the third one is Reg(Sing(Sing($X)))$, etc. On each of these strata, we can apply Theorem \ref{main_theorem}. Thus we see that we can stratify $X$ into a finite set of locally closed subsets of $X$, such that on each stratum $S$ we have an inequality of the form
  $$||f(\ovl{z})||\geq (K_1d(f^{-1}(0),\ovl{z}))^{K_2^{\left(\frac{1}{\left|\left|H(\ovl{z})\right|\right|}\right)^{K_3}}} \ \ \ \forall \ovl{z}\in S$$
where $\ovl{S}=f^{-1}(0)$ ($\ovl{S}$ denotes the Zariski closure of $S$) and $S=\ovl{S}\backslash H^{-1}(0)$.\\
By replacing $H$ by some power of $H$, we may even assume that $K_3=1$.

    \end{rmk}

    \begin{rmk}
    If $f_1$,..., $f_n$ are polynomials of degree $1$ with respect to $z$, then the Artin function of $f$ is bounded by an affine function (cf. Th\'eor\`eme 3.1 \cite{Ro06}). Thus such a system satisfies Theorem 1.1'.
    \end{rmk}

    \begin{rmk}
    This theorem is not true if $A$ is of dimension more than 2. In \cite{Ro06} the following example is given:\\
    Let $A:=\k\lb x_1,x_2,x_3\rb$ and let $f:=z_1z_2-z_3z_4$. Here $H_{f}=\sqrt{H_f}=(z_1,z_2,z_3,z_4)$. For any $c\geq 3$, let us denote
    $$\ovl{z}_1^{(c)}:=x_1^c,\ \  \ovl{z}_2^{(c)}:=x_2^c,\ \ \ovl{z}_3^{(c)}:=x_1x_2-x_3^c.$$
    Then there exists $\ovl{z}_4^{(c)}\in A$ such that $\ovl{z}^{(c)}_1\ovl{z}^{(c)}_2-\ovl{z}^{(c)}_3\ovl{z}^{(c)}_4\in \m^{c^2}$. Moreover it is proved in \cite{Ro06}, that any solution $\wdt{z}\in A^4$ of $f=0$ satisfies $\displaystyle\min_{i=1,...,4}\{\ord(\ovl{z}_i^{(c)}-\wdt{z}_i)\}\leq c$. Thus there do not exist constants $a>0$ and $b>0$ such that $||f(\ovl{z}^{(c)})||\geq ad(\ovl{z}^{(c)},f^{-1}(0))^b$ for all $c\in\N$, but $||H_f(\ovl{z}^{(c)})||=e^{-2}$ is constant for any $c$.\\
    
    \end{rmk}
    
    \begin{rmk}
    This theorem is still valid if  $A$ is any excellent Henselian local ring whose completion is $\k\lb x,y\rb$ by Artin Approximation Theorem \cite{Po}. Indeed in this case the zero set of $f$ in $A$ is dense in the zero set of $f$ in $\k\lb x,y\rb$ for the topology induced by the norm $||.||$.
    \end{rmk}

  \begin{proof}[Proof of Theorem \ref{main_theorem}]
  We begin to prove the first inequality. Let us denote by $\m:=(x,y)$ the maximal ideal of $\k\lb x,y\rb$.
Let $c\in\N$. Let $s\in\N$ and let $\ovl{z}\in\k\lb x,y\rb^m$ such that $f(\ovl{z})\in \m^{\g}$ for all $f\in I$ with  
$$\g=\g(m,d,s,c):=a\big(2(m+1)s,4mds\big)(c+2s+1)$$ where $a(.,.)$ is the function of Theorem \ref{1var} and let us assume that  $H_{f_1,...,f_n}(\ovl{z})\not\subset \m^{s}$. Since $H_{f_1,...,f_n}$ is generated by the elements $\d_Ek_E$ where $\d_E$ is a minor of the Jacobian matrix $\left(\frac{\partial f_i}{\partial z_j}\right)_{i\in E,  1\leq j\leq m}$ and $k_E\in (f_i,i\in E):I)$, there exists $E\subset \lb 1,n\rb$ such that $\d_E(\ovl{z})k_E(\ovl{z})\notin \m^{s}$. In particular $\d_E(\ovl{z})\notin \m^s$. Let $\d$ denote this minor, i.e. $\d:=\d_E$. Then we remark that $\deg(\d)\leq m(d-1)$.\\
For convenience we will assume that $E=\{1,...,q\}$ where $q\leq n$.\\
\\
Let $r:=\ord(\d^2(\ovl{z}))\leq 2(s-1)$. In this case $\ord(k_E(\ovl{z}))< s-\frac{r}{2}$. If $r=0$ then
  $\d^2(\ovl{z})$ is invertible and $f(\ovl{z})\in(\d^2(\ovl{z}))\m^{\g}\subset(\d^2(\ovl{z}))\m^{c}$. In this case we set $\ovl{\ovl{z}}:=\ovl{z}$. Then let us assume that $\d^2(\ovl{z})$ is not invertible. Since $\k$ is infinite, by making a linear change of variables in $x$ and $y$, we may assume that $\d^2(\ovl{z})$ is regular with respect to $y$ and by the Weierstrass Preparation Theorem 
$\d^2(\ovl{z})=\ovl{u}\, \ovl{a}$ where $\ovl{u}$ is a unit and
$$\ovl{a}:=y^r+\ovl{a}_1(x)y^{r-1}+\cdots+\ovl{a}_r(x)$$
where $a_i(x)\in(x)^{r-i}\k\lb x\rb$, $1\leq i\leq r$.\\
Then we  perform the Weierstrass division of $\ovl{z}_i$ by $\ovl{a}$:
$$\ovl{z}_i=\ovl{a}\, \ovl{w}_i+\sum_{j=0}^{r-1}\ovl{z}_{i,j}(x)y^j$$
for $1\leq i\leq m$. Set
$$\ovl{z}^*_i:=\sum_{j=0}^{r-1}\ovl{z}_{i,j}(x)y^j,\ \ 1\leq i\leq m.$$
Then $\d^2(\ovl{z})=\d^2(\ovl{z}^*)$ mod. $\ovl{a}$ and $f_k(\ovl{z})=f_k(\ovl{z}^*)$ mod. $\ovl{a}$ for $1\leq k\leq n$.\\
\\
Let $z_{i,j}$, $1\leq i\leq m$, $0\leq j\leq r-1$, be new variables. Let us define $\displaystyle z^*_i:=\sum_{j=0}^{r-1}z_{i,j}y^j$, $1\leq i\leq m$, and
$$A(a_i,y):=y^r+a_1y^{r-1}+\cdots+a_r\in\k[y,a_1,...,a_r]$$
where $a_1$,..., $a_r$ are new variables. Then  the Euclidean division of $\d^2(z^*)$ and $f_i(z^*)$ by $A$ (seen as a polynomial in $y$)  may be written as follows:
$$\d^2(z^*)=A.Q+\sum_{l=0}^{r-1}G_ly^l$$
$$f_k(z^*)=A.Q_k+\sum_{l=0}^{r-1}F_{k,l}y^l,\ \ 1\leq k\leq r$$
 where $Q$, $Q_k\in\k[x,y, z_{i,j}, a_p]$ and $G_l$, $F_{k,l}\in\k[x,z_{i,j},a_p]$. Moreover $\deg(f_k(z^*))\leq dr$ and $\deg(\d^2(z^*))\leq 2m(d-1)r$, hence we get  $\deg(F_{k,l})\leq dr-l\leq 2ds$ and $\deg(G_l)\leq 2m(d-1)r-l\leq 4mds$ by the following lemma:
 \begin{lemma}\label{lemme1}
 Let $P(A,U, V)\in\k[A,U,V]$  where $U=(U_1,...,\,U_p)$, $A=(A_1,...,\,A_{r})$ and $V$ is a single variable, and set $A(V):=V^r+A_{1}V^{r-1}+\cdots+A_r\in \k[A,\,V]$. Let us consider the division of $P$ by $A$ with respect to $V$: $P=AQ+R$ with $\deg_{V}(R)<\deg_{V}(P)$. Then $\deg(R)\leq \deg(P)$.
\end{lemma}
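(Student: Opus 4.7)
The plan is to proceed by induction on $\deg_V(P)$, showing that each elementary step of Euclidean division by the $V$-monic polynomial $A(V)$ preserves the bound on total degree. First I would dispose of the trivial case $\deg_V(P)<r$, where one takes $Q=0$ and $R=P$. Otherwise, setting $k:=\deg_V(P)\geq r$, I would write
$$
P=p_k(A,U)\,V^k+P_1
$$
with $\deg_V(P_1)<k$ and $p_k\in\k[A,U]\setminus\{0\}$; note that $\deg(p_k V^k)=\deg(p_k)+k\leq\deg(P)$.

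The key step is to subtract the multiple $p_k\,V^{k-r}A(V)$ of $A(V)$ from $P$, which cancels the leading $V$-term and introduces only the monomials appearing in $-p_k(A_1V^{k-1}+\cdots+A_rV^{k-r})$. Each such new monomial $-p_k A_i V^{k-i}$ has total degree
$$
\deg(p_k)+1+(k-i)\leq\deg(p_k)+k\leq\deg(P),
$$
since $i\geq 1$. Hence $P':=P-p_k V^{k-r}A(V)$ satisfies $\deg_V(P')<k$ and $\deg(P')\leq \deg(P)$. Applying the induction hypothesis to $P'$ furnishes a remainder $R'$ with $\deg(R')\leq\deg(P')\leq\deg(P)$; since $P$ and $P'$ differ by a multiple of $A(V)$, the uniqueness of Euclidean division yields $R=R'$, and we conclude.

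The only delicate point I expect is verifying that the replacement $V^k\leadsto -\sum_i A_iV^{k-i}$ really does not raise the total degree; this reduces to the elementary inequality $1+(k-i)\leq k$ for $i\geq 1$, i.e.\ trading one factor of $V$ against one $A_i$ is paid for by the drop in $V$-degree. No real obstacle is to be expected: the lemma simply says that Euclidean reduction is non-increasing for the total-degree filtration, and the induction above formalizes this.
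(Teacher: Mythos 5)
Your proof is correct and follows essentially the same route as the paper's: induction on $\deg_V(P)$, performing one Euclidean reduction step $P\mapsto P-p_kV^{k-r}A(V)$ and checking that swapping one factor of $V$ for one $A_i$ does not increase total degree. The paper states the degree bound on the reduced polynomial without the explicit monomial-by-monomial verification you give, but the underlying argument is identical.
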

\begin{proof}[Proof of Lemma \ref{lemme1}]
We can write
$P(V):=P_eV^e+\dots+P_0$ with $P_i\in \k[U]$, $P_e\neq 0$ and $\deg(P_i)\leq d-i$ where $d:=\deg(P)$. Then we have:
$$P=P_eV^{e-r}A(V)+R_1$$
with $$R_1:=(P_{e-1}-P_eA_{1})V^{e-1}+\cdots+(P_{e-r}-P_eA_r)V^{e-r}+P_{e-r-1}V^{e-r-1}+\cdots+P_0$$
where $\deg_V(R_1)<\deg_V(P)$. Moreover we see that $\deg(R_1)\leq d$. Thus we obtain the result by induction on $e:=\deg_V(P)$.
\end{proof}
 Then we have 
 $$\d^2(\ovl{z}^*)=\sum_{l=0}^{r-1}G_l(t,\ovl{z}_{i,j}(x),\ovl{a}_p(x))y^l \text{ mod. } (\ovl{a})$$
 $$f_k(\ovl{z}^*)=\sum_{l=0}^{r-1}F_{k,l}(t,\ovl{z}_{i,j}(x),\ovl{a}_p(x))y^l \text{ mod. } (\ovl{a}),\ \ 1\leq k\leq r.$$
 But $\d^2(\ovl{z}^*)=0$ mod. $(\ovl{a})$ thus 
  $G_l(x,\ovl{z}_{i,j}(x),\ovl{a}_p(x))=0$ for all $l$. Moreover $f_k(\ovl{z})=0$ mod. $(\ovl{a})+\m^{\g}$, thus
  $f_k(\ovl{z}^*)=0$ mod. $(\ovl{a})+\m^{\g}$ and $F_{k,l}(x,\ovl{z}_{i,j}(x),\ovl{a}_p(x))\in (x)^{\g}$ for all $k$ and $l$ by Remark 6.6 \cite{BM}.\\
  \\
    By Theorem \ref{1var}, there exist $\ovl{\ovl{z}}_{i,j}(x)\in\k\lb x\rb$ and $\ovl{\ovl{a}}_p(x)\in\k\lb x\rb$ for all $i$, $j$ and $p$, such that $G_l(x,\ovl{\ovl{z}}_{i,j}(x),\ovl{\ovl{a}}_p(x))=0$ and $F_{k,l}(x,\ovl{\ovl{z}}_{i,j}(x),\ovl{\ovl{a}}_p(x))=0$ for all $k$ and $l$ and $\ovl{z}_{i,j}(x)-\ovl{\ovl{z}}_{i,j}(x)$, $\ovl{a}_p(x)-\ovl{\ovl{a}}_p(x)\in(x)^{c+2s}$ for all $i$, $j$ and $p$.\\
\\
Let us denote
$$\ovl{\ovl{a}}:=y^r+\ovl{\ovl{a}}_1(x)y^{r-1}+\cdots+\ovl{\ovl{a}}_r(x)$$
$$\ovl{\ovl{z}}_i:=\ovl{\ovl{a}}\,\ovl{w}_i+\sum_{j=0}^{d-1}\ovl{\ovl{z}}_{i,j}(x)y^j$$
 for all $i$. It is straightforward to check that $f_i(\ovl{\ovl{z}})=0$ mod. $\d^2(\ovl{\ovl{z}})$  for $1\leq i\leq r$ and $\ovl{z}_j(x)-\ovl{\ovl{z}}_j(x)\in\m^{c+2s}$ for $1\leq j\leq m$.\\
\\
Since $\ord(\d^2(\ovl{\ovl{z}}))=r$, then we have $f(\ovl{\ovl{z}})\in \d^2(\ovl{\ovl{z}})\m^{c+2s-r}$. In any case we have $f(\ovl{\ovl{z}})\in\d^2(\ovl{\ovl{z}})\m^c$.
 Then we use the following generalization of the Implicit Function Theorem: \\
\begin{theorem}\cite{To}\label{To}
 Let $(A,\m_A)$ be a  complete local ring and let $f_1(z)$,..., $f_q(z)\in A[z]$ with $q\leq m$.  Let $\d$ be  a $q\times q$ minor of the Jacobian matrix $\frac{\partial(f_1,...,f_q)}{\partial(z_1,...,z_m)}$. Let us assume that there exists $\ovl{\ovl{z}}:=(\ovl{\ovl{z}}_1,...,\ovl{\ovl{z}}_m)\in A^m$ such that
$$f_i(\ovl{\ovl{z}})\in (\d(\ovl{\ovl{z}}))^2\m_A^c \text{ for all }1\leq i\leq q$$
and for some $c\in\N$. Then there exists $\wdt{z}=(\wdt{z}_1,...,\wdt{z}_m)\in A^m$ such that 
$$f_i(\wdt{z})=0 \text{ for all } 1\leq i\leq q,\ \text{ and }\  \wdt{z}_j-\ovl{\ovl{z}}_j\in (\d(\ovl{z}))\m_A^c \text{ for all } 1\leq j\leq m.$$\\
\end{theorem}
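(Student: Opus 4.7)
The plan is a Newton-type iteration in which the singularity of the Jacobian is absorbed into the ansatz for the correction. After permuting the variables $z_1,\dots,z_m$, we may assume that $\d$ is the determinant of the $q\times q$ matrix $M:=(\partial f_i/\partial z_j)_{1\leq i,j\leq q}$. Write $M^*$ for the classical adjoint of $M$, so that $MM^*=M^*M=\d\,I_q$ as matrices over $A[z]$. By hypothesis we may write $f_i(\ovl{\ovl{z}})=\d(\ovl{\ovl{z}})^2 t_i$ with $t_i\in\m_A^c$ for $1\leq i\leq q$. I look for the desired solution in the form
\[
\wdt{z}_j:=\ovl{\ovl{z}}_j+\d(\ovl{\ovl{z}})\bigl(M^*(\ovl{\ovl{z}})\,s\bigr)_j\quad(1\leq j\leq q),\qquad \wdt{z}_j:=\ovl{\ovl{z}}_j\quad(q<j\leq m),
\]
for an unknown vector $s=(s_1,\dots,s_q)$ with $s_i\in\m_A^c$ to be determined; this ansatz automatically enforces $\wdt{z}_j-\ovl{\ovl{z}}_j\in(\d(\ovl{\ovl{z}}))\m_A^c$ for every $j$.

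Setting $v:=\wdt{z}-\ovl{\ovl{z}}$, a Taylor expansion of the polynomial $f_i$ about $\ovl{\ovl{z}}$ gives
\[
f_i(\wdt{z})=f_i(\ovl{\ovl{z}})+\sum_{j=1}^{q}M_{ij}(\ovl{\ovl{z}})\,v_j+R_i(v),
\]
where $R_i(v)$ is a polynomial in $v_1,\dots,v_q$ of order at least $2$ at the origin. The linear term equals $\d(\ovl{\ovl{z}})\,(MM^*s)_i=\d(\ovl{\ovl{z}})^2 s_i$. Each monomial $v^\alpha$ with $|\alpha|\geq 2$ factors as $\d(\ovl{\ovl{z}})^{|\alpha|}\prod_j(M^*s)_j^{\alpha_j}$, so after pulling out $\d(\ovl{\ovl{z}})^2$ one obtains a clean factorization $R_i(v)=\d(\ovl{\ovl{z}})^2\,\wdt{R}_i(s)$ for an explicit polynomial $\wdt{R}_i\in A[s]$ of order $\geq 2$ in $s$ at the origin. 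Using $f_i(\ovl{\ovl{z}})=\d(\ovl{\ovl{z}})^2 t_i$ and dividing by $\d(\ovl{\ovl{z}})^2$, the equations $f_i(\wdt{z})=0$ thus reduce to the fixed-point system
\[
s_i=-t_i-\wdt{R}_i(s),\qquad 1\leq i\leq q.
\]

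I plan to solve this system by Picard iteration, taking $s^{(0)}:=0$ and $s^{(k+1)}:=-t-\wdt{R}(s^{(k)})$. Since $t_i\in\m_A^c$, one has $s^{(1)}-s^{(0)}\in(\m_A^c)^q$. An induction on $k$ will then give $s^{(k+1)}-s^{(k)}\in(\m_A^{(k+1)c})^q$: for any multi-index $\alpha$ with $|\alpha|\geq 2$, the identity $a^n-b^n=(a-b)(a^{n-1}+\cdots+b^{n-1})$ together with $s^{(k)},s^{(k-1)}\in(\m_A^c)^q$ yields $(s^{(k)})^\alpha-(s^{(k-1)})^\alpha\in(s^{(k)}-s^{(k-1)})\cdot\m_A^{(|\alpha|-1)c}$, hence $\wdt{R}(s^{(k)})-\wdt{R}(s^{(k-1)})\in(s^{(k)}-s^{(k-1)})\m_A^c$. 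By completeness of $A$ the sequence $\{s^{(k)}\}$ converges in the $\m_A$-adic topology to some $s\in(\m_A^c)^q$ satisfying $s=-t-\wdt{R}(s)$, and the corresponding $\wdt{z}$ meets all the conclusions of the theorem. The point that needs the most care is the clean factorization $R_i(v)=\d(\ovl{\ovl{z}})^2\,\wdt{R}_i(s)$ with $\wdt{R}_i\in A[s]$ of order $\geq 2$, since this is exactly what allows the fixed-point equation to be formulated over $A$ without any division by $\d(\ovl{\ovl{z}})$; it is forced monomial-by-monomial by the facts that $R_i(v)$ has order $\geq 2$ in $v$ and each component $v_j$ is divisible by $\d(\ovl{\ovl{z}})$.
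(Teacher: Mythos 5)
Your argument is correct and is the classical proof of Tougeron's implicit function lemma; the paper itself states this result with a citation to [To] and gives no proof, and your Newton--Picard iteration with the ansatz $v=\d(\ovl{\ovl{z}})M^*(\ovl{\ovl{z}})s$ is exactly the standard route. The only point worth making explicit is that the ``Taylor expansion'' $f_i(\ovl{\ovl{z}}+v)=f_i(\ovl{\ovl{z}})+\sum_j\frac{\partial f_i}{\partial z_j}(\ovl{\ovl{z}})v_j+R_i(v)$ should be read as the purely algebraic expansion of the polynomial $f_i(z+v)$ in powers of $v$ (with coefficients in $A$, no factorials), which holds over any ring and is all that your factorization of $R_i$ requires.
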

Thus $\ord(k_E(\wdt{z}))=\ord(k_E(\ovl{z}))$ since $\wdt{z}_j-\ovl{z}_j\in \m^{c+2s-r}$, $1\leq j\leq m$,  and since $\ord(k_E(\ovl{z}))< s-\frac{r}{2}\leq 2s-r$, hence $k_E(\wdt{z})\neq0$. Since $k_E(z)f_i(z)\in (f_1,...,f_q)$ for any $i$, we have $f_i(\wdt{z})=0$ for $1\leq i\leq n$.

Thus we have proved that if $f(\ovl{z})\in\m^{\g}$ and $H_{f_1,...,f_n}(\ovl{z})\not\subset \m^s$, then there exists $\wdt{z}\in A^m$ such that $f(\wdt{z})=0$ and $\wdt{z}-\ovl{z}\in\m^c$. By Theorem \ref{1var}, there exists $K$ such that $a(m,d)\leq d^{K^m}$ for all $d\geq 2$, $m\geq 1$. Since $c+2s+1\leq(c+1)(2s+1)$ we have $\g\leq b(c+1)$ where 
\begin{equation}\label{bound}b\leq (2s+1)(4mds)^{K^{2(m+1)s}}\leq d^{{K'}^{ms}}, \ \forall d\geq 2,\forall m,s\geq 1,\end{equation}
for some constant $K'$ large enough (the existence of $K'$ is proven exactly as the existence of $C$ in Remark \ref{borne}). Thus, for any $d\geq 2$, $m\geq 1$, $f(z)\in A[z]^n$ such that $\deg(f)\leq d$: 
\begin{equation}\label{implication}\ord(f(\ovl{z}))\geq d^{{K'}^{m\, \ord(H_{f_1,...,f_n}(\ovl{z}))}}(c+1)\Longrightarrow \max_{z\in f^{-1}(0)}\min_i\{\ord(z_i-\ovl{z}_i)\}\geq c.\end{equation}
Hence, there exists $K>0$ such that 
$$\forall \ovl{z}\in A^m\backslash H^{-1}_{f_1,...,f_n}(0)\ \ \max_{z\in f^{-1}(0)}\min_i\{\ord(z_i-\ovl{z}_i)\}\geq \frac{1}{d^{K^{m\,\ord(H_{f_1,...,f_n}(\ovl{z}))}}}\ord(f(\ovl{z}))-1.$$
Thus $$||f(\ovl{z})||\geq \left(\frac{1}{e}d(\ovl{z},f^{-1}(0))\right)^{d^{K^{m\,\ord(H_{f_1,...,f_n}(\ovl{z}))}}}$$
Since $K^{m\,\ord(H_{f_1,...,f_n}(\ovl{z}))}=||H_{f_1,...,f_n}(\ovl{z})||^{-\log(K)m}$, we have proved Inequality (\ref{eq1}) with $K_1:=\frac{1}{e}$ and $K_2:=\log(K)$.\\
\\
\\
\\
Let us prove the second inequality. Let $\ovl{z}\in A^m$ such that $H(\ovl{z})\not\subset \m^s$ and $I(\ovl{z})\subset\m^{\g'}$ where $$\g'=a\big(2(m+1)sd^{C^m},4mdsd^{C^m}\big)(c+2sd^{C^m}+1),$$ where  $C$ is the constant of Remark \ref{borne}, i.e. $\g'=\g(m,d,sd^{C^m},c)$. In particular $\sqrt{H+I}(\ovl{z})\not\subset \m^s$.
Then we have  $\sqrt{H+I}^{d^{C^m}}(\ovl{z})\not\subset \m^{sd^{C^m}}$,  thus $(H_{f_1,...,f_n}+I)(\ovl{z})\not\subset \m^{sd^{C^m}}$ since $\sqrt{H+I}^{d^{C^m}}\subset H_{f_1,...,f_n}+I$ by Corollary \ref{cor} and Remark \ref{borne}.  But   $I(\ovl{z})\subset\m^{\g'}\subset \m^{sd^{C^m}}$, then $H_{f_1,...,f_n}(\ovl{z})\not\subset \m^{sd^{C^m}}$. Thus, by the previous case (by replacing $s$ by $sd^{C^m}$), we see that there exists $\wdt{z}\in A^m$ such that $f(\wdt{z})=0$ and $\wdt{z}-\ovl{z}\in\m^c$. Moreover 
 there exists a constant $K''>0$ such that $d^{C^m}m\leq d^{K''^m}$ for all $d\geq 2$ and all $m\geq 1$, thus we have
 $$\g'\leq d^{K'^{msd^{C^m}}}\leq d^{K'^{sd^{K''^m}}}.$$
Hence, exactly as the end of the proof of Inequality \eqref{eq1}, we have 
$$||f(\ovl{z})||\geq (K_1d(\ovl{z},f^{-1}(0)))^{d^{\left(\frac{1}{\left|\left|H(\ovl{z})\right|\right|}\right)^{d^{K_3^m}}}}$$ 
for some positive constants $K_1>0$ and $K_3>0$ independent of $\ovl{z}$.

\end{proof}

\begin{ex}
Let $f(z)\in \k[z]^n$ and let us assume that $z_m\in \sqrt{H_{f}}$. Let $h\in(x,y)^2\k\lb x,y\rb$ be a non-zero  power series without multiple factor and let $$g(z_1,...,z_{m-1}):=f(z_1,...,z_{m-1},h).$$ Then we claim that the Artin function of $g$ is bounded by an affine function.\\
\\
Indeed, let us denote by $e$ the order of $h$. By Theorem \ref{main_theorem} there exists two constants $a\geq0$ and $b\geq 0$ (depending on $e$ and a power of $z_m$ which belongs to $H_f$) such that for any $\ovl{z}_1$,..., $\ovl{z}_{m-1}\in\k\lb x,y\rb$ with $$f(\ovl{z}_1,...,\ovl{z}_{m-1},h)\in (x,y)^{ac+b}$$
there exists $\wdt{z}_1$,...., $\wdt{z}_{m-1}$, $\wdt{h}\in\k\lb x,y\rb$ such that
$$f(\wdt{z}_1,...,\wdt{z}_{m-1},\wdt{h})=0$$
$$\text{and } \wdt{z}_i-\ovl{z}_i, \wdt{h}-h\in(x,y)^c, \ \ \forall i.$$
We claim that $J:=\left(\frac{\partial h}{\partial x},\frac{\partial h}{\partial y}\right)$, the Jacobian ideal of $h$, contains a power of $(x,y)$. Indeed, it is well known that $h\in\sqrt{J}$ (see Theorem 7.1.5 \cite{HS} for example) and after a linear change of coordinates we can write $h=g.u$ where $g$ is a Weierstrass polynomial in the variable $y$ and $u$ is a unit. Since $u$ is a unit and $J$ does not depend on the choice of cooordinates, we have $\sqrt{J}=\sqrt{J'}$ where $J'$ is the jacobian ideal of $g$. Moreover $g$ and $\frac{\partial g}{\partial y}$ are coprime since $g$ is a polynomial with no multiple factor and char$(\k)=0$. But $g\in\sqrt{J'}$ thus height$(\sqrt{J'})=2$ and $\sqrt{J}=\sqrt{J'}=(x,y)$ since $(x,y)$ is the only height two radical ideal of $\k\lb x,y\rb$. This proves that $J$ contains a power of $(x,y)$.\\
 Thus there exists $k\in\N$ such that $(x,y)^{k}\subset J^2$. Let us assume that $c>k+1$ and let us consider the equation:
$$P(x,y,x_1,y_1):=\wdt{h}(x,y)-h(x_1,y_1)=0$$
 where $x_1$ and $y_1$ are new variables. Then 
 \begin{equation*}\begin{split} P(x,y,x,y)\in (x,y)^{c}\subset (x,y)^{c-k}&\left(\frac{\partial h}{\partial x}(x,y  ),\frac{\partial h}{\partial y}(x,y)\right)^2=\\
 &=(x,y)^{c-k}\left(\frac{\partial P}{\partial x_1}(x,y,x,y),\frac{\partial P}{\partial y_1}(x,y,x,y)\right)^2\end{split}\end{equation*}
 since $\frac{\partial P}{\partial x_1}(x,y,x,y)=-\frac{\partial h}{\partial x}(x,y)$ and $\frac{\partial P}{\partial y_1}(x,y,x,y)=-\frac{\partial h}{\partial y}(x,y).$
Thus by Theorem \ref{To}, there exists $\ovl{x}_1$, $\ovl{y}_1\in \k\lb x,y\rb$ such that $P(x,y,\ovl{x}_1,\ovl{y}_1)=0$ and $\ovl{x}_1(x,y)-x$, $\ovl{y}_1(x,y)-y\in (x,y)^{c-k}$. Thus the $\k$-morphism $\phi$ defined by $\phi(p(x,y)):=p(\ovl{x}_1(x,y),\ovl{y}_1(x,y))$, for all $p\in\k\lb x,y\rb$, is a $\k$-automorphism of $\k\lb x,y\rb$. By assumption $\phi(h)=\wdt{h}$ and $\phi(p)-p\in (x,y)^{c-k}$ for any $p\in\k\lb x,y\rb$. Let $\wdt{z}_i':=\phi^{-1}(\wdt{z}_i)$ for $1\leq i\leq m$. Then $\wdt{z}_i'-z_i\in (x,y)^{c-k}$, for $1\leq i\leq m$, and $f(\wdt{z}_1',...,\wdt{z}_m',h)=0$. This proves that Artin function of $g$ is bounded by $c\lgm a(c+k)+b$.

\end{ex}

From theorem \ref{main_theorem} we can find the following bound of the Artin function of an isolated singularity (a similar bound has been given in \cite{Ro10} for binomial ideals):

\begin{corollary}\label{de}
Let   $\k$ be an infinite field. Let $I=(f_1,...,f_n)$ be an ideal of $\k[x,y,z_1,...,z_m]$ such that $I\subset (z_1,...,z_m)$. Let us assume that $H_{f_1,...,f_n}$ contains a power of the ideal $(z_1,..., z_m)$. Then the Artin function of $I$ is bounded by a function of the form $c\lgm K^{K^c}$ for some constant $K>0$.
\end{corollary}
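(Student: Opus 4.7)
The plan is to deduce the corollary from inequality \eqref{eq1} of Theorem \ref{main_theorem} together with two elementary observations coming from the hypotheses. First, the assumption $I\subset (z_1,\ldots,z_m)$ implies that every $f_i$ vanishes at $z=0$, so $0\in f^{-1}(0)$. Second, fixing $p\geq 1$ such that $(z_1,\ldots,z_m)^p\subset H_{f_1,\ldots,f_n}$ and evaluating at an arbitrary $\ovl{z}\in A^m$ yields $(\ovl{z}_1,\ldots,\ovl{z}_m)^p\subset H_{f_1,\ldots,f_n}(\ovl{z})$ as ideals of $A=\k\lb x,y\rb$, hence
$$\ord\bigl(H_{f_1,\ldots,f_n}(\ovl{z})\bigr)\leq p\cdot \ord(\ovl{z}),$$
where $\ord(\ovl{z}):=\min_i\ord(\ovl{z}_i)$. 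Since $A$ is a domain and $z_i^p\in H_{f_1,\ldots,f_n}$ for every $i$, this also forces $H_{f_1,\ldots,f_n}^{-1}(0)=\{0\}$.

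Next, fix $c\in\N$ and $\ovl{z}\in A^m$ with $f(\ovl{z})\in\m^N$ for some $N$ to be determined. I split into cases. If $\ord(\ovl{z})\geq c$, then $\wdt{z}:=0\in f^{-1}(0)$ satisfies $\wdt{z}-\ovl{z}\in\m^c$ and nothing more is needed. Otherwise $\ord(\ovl{z})\leq c-1$, so $\ovl{z}\notin H_{f_1,\ldots,f_n}^{-1}(0)$ and the displayed inequality above gives $||H_{f_1,\ldots,f_n}(\ovl{z})||\geq e^{-pc}$. Feeding this lower bound into \eqref{eq1} bounds the iterated exponent by
$$d^{\bigl(1/||H_{f_1,\ldots,f_n}(\ovl{z})||\bigr)^{K_2m}}\leq d^{e^{K_2 mpc}},$$
so that $||f(\ovl{z})||\geq \bigl(K_1\,d(\ovl{z},f^{-1}(0))\bigr)^{d^{e^{K_2 mpc}}}$.

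Setting $E(c):=d^{e^{K_2mpc}}$, a direct computation shows that the condition $\ord(f(\ovl{z}))\geq E(c)(c+|\ln K_1|)$ forces $d(\ovl{z},f^{-1}(0))\leq e^{-c}$ and hence the existence of an exact solution $\wdt{z}\in f^{-1}(0)$ with $\wdt{z}-\ovl{z}\in\m^c$. Since $d,m,p,K_1,K_2$ are fixed constants and $E(c)=\exp\bigl((\log d)\,e^{K_2mpc}\bigr)$, choosing $\log K\geq\max(K_2mp,\log d)$ plus a small additive constant yields $E(c)(c+|\ln K_1|)\leq K^{K^c}$, which is the required doubly exponential bound on the Artin function. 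I do not anticipate a serious obstacle: the argument is careful bookkeeping on top of \eqref{eq1}, the main observation being that the singular locus of $\Spec(A[z]/I)$ is concentrated at the origin, so the approximate solutions for which \eqref{eq1} becomes useless are precisely those already within $\m^c$ of the known exact solution $0$.
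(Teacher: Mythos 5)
Your proof is correct and follows essentially the same route as the paper: observe that $0\in f^{-1}(0)$ because $I\subset(z)$, use $(z)^p\subset H_{f_1,\ldots,f_n}$ to bound $\ord\bigl(H_{f_1,\ldots,f_n}(\ovl{z})\bigr)$ by $p\cdot\ord(\ovl{z})$, split into the two cases $\ord(\ovl{z})\geq c$ (take $\wdt{z}=0$) and $\ord(\ovl{z})<c$ (apply Theorem \ref{main_theorem}), and then carry out the bookkeeping to absorb the resulting iterated exponential into $K^{K^c}$. The only cosmetic difference is that the paper invokes the internal estimate \eqref{bound} from the proof of Theorem \ref{main_theorem} (stated directly in terms of $\m$-adic orders), whereas you feed through the published inequality \eqref{eq1} and convert back to orders; the content is the same.
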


\begin{proof}
Let $A$ denote the ring $\k\lb x,y\rb$. 
For $\ovl{z}\in A^m$  let us set  $D(\ovl{z}):=\min_i\ord(\ovl{z}_i)$. Let $k\in\N$ such that $(z)^k\subset H_{f_1,...,f_m}$ and let $d\in\N$ be a bound of the degrees of the $f_i$'s.\\
 By Theorem \ref{main_theorem}, for any $\ovl{z}\in A^m$ such that $D(\ovl{z})<\infty$,  if $I(\ovl{z})\in\m^{d^{K_1^{mk(D(\ovl{z})+1)}}(c+1)}$, then there exists $\wdt{z}\in A^m$ such that $I(\wdt{z})=0$ and $\wdt{z}-\ovl{z}\in \m^cA^m$  (see Inequality (\ref{bound}) in the proof of Theorem \ref{main_theorem}).
 Let us remark that there  exists a constant $K>1$ such that $d^{K_1^{mkc}}(c+1)\leq K^{K^c}$ for any $c\geq 1$ (this inequality is proven exactly as the existence of $C$ in Remark \ref{borne}).\\
 Now let $\ovl{z}$ be any element of $A^m$ such that $I(\ovl{z})\in \m^{K^{K^c}}$. Then two cases may occur: either $D(\ovl{z})\geq c$, either $D(\ovl{z})<c$. In the first case let us set $\wdt{z}_j:=0$ for $1\leq j\leq m$. If $D(\ovl{z})<c$, then $I(\ovl{z})\in \m^{d^{K_1^{mk(D(\ovl{z})+1)}}(c+1)}$ by assumption on $K$, thus there exists $\wdt{z}\in A^m$ such that $I(\wdt{z})=0$ and $\wdt{z}_j-\ovl{z}_j\in \m^c$ for $1\leq j\leq m$. This proves the corollary.
\end{proof}



\begin{thebibliography}{00}
  \bibitem[Ar69]{Ar69} M. Artin, Algebraic approximation of structures over complete local rings, \textit{Publ. Math. IHES}, \textbf{36}, (1969), 23-58.
  \bibitem[Ar70]{Ar70} M. Artin, Construction techniques  for algebraic spaces,  \textit{Actes Congres Intern. Math.}, \textbf{1}, (1970), 419-423.

\bibitem[BM87]{BM} E. Bierstone, P. Milman, Relations among analytic functions I, \textit{Ann. Inst. Fourier}, \textbf{37}, (1987), no. 1, 187-239.
  \bibitem[El73]{Elk} R. Elkik, Solutions d'\'equations \`a coefficients dans un anneau hens\'elien,  \textit{Ann. Sci. \'Ecole Norm. Sup. (4)}, \textbf{6} (1973), 553-604.
  \bibitem[G-R03]{G-R} O. Gabber, L.  Ramero,  Almost ring theory, Lecture Notes in Mathematics, \textbf{1800}, Springer-Verlag, Berlin, 2003.    
    \bibitem[Gr66]{Gr} M. J. Greenberg, Rational points in Henselian discrete valuation rings, \textit{Publ. Math. IHES}, \textbf{31}, (1966), 59-64. 
  \bibitem[HS]{HS} C. Huneke, I. Swanson,  Integral closure of ideals, rings, and modules, London Mathematical Society Lecture Note Series, 336. Cambridge University Press, Cambridge, 2006.
 \bibitem[\L 59]{L} S. \L ojasiewicz, Sur le probl\`eme de la division, \textit{Studia Math.}, \textbf{18}, (1959), 87-136.


\bibitem[Po86]{Po} D. Popescu, General N\'eron desingularization and approximation, \textit{Nagoya Math. J.}, \textbf{104}, (1986), 85-115.
\bibitem[Po00]{Po2} D. Popescu,  Artin approximation, \textit{Handbook of algebra},  \textbf{2}, 321-356, North-Holland, Amsterdam, 2000.
\bibitem[Ro05]{Ro2} G. Rond, Sur la lin\'earit\'e de la fonction de Artin, \textit{Ann. Sci. \'Ecole Norm. Sup. (4)}, \textbf{38}, no. 6, (2005), 979-988.
\bibitem[Ro06]{Ro06} G. Rond, Lemme d'Artin-Rees, th\'eor\`eme d'Izumi et
fonctions de Artin, \textit{J. Algebra}, \textbf{299}, no. 1, (2006), 245-275.
\bibitem[Ro10]{Ro10} G. Rond, Bornes effectives des fonctions d'approximation des solutions formelles d'\'equations binomiales, \textit{J. Algebra}, \textbf{323}, no. 9, (2010), 2547-2555.
\bibitem[Se74]{Se} A. Seidenberg, Constructions in Algebra, \textit{Trans. A.M.S.}, \textbf{197}, (1974), 273-313.
\bibitem[Sp99]{Sp} M. Spivakovsky, A new proof of D. Popescu's theorem on smoothing of ring homomorphisms,  \textit{J. Amer. Math. Soc.}, \textbf{12}  (1999),  no. 2, 381-444.
\bibitem[Te90]{Te1} B. Teissier, R\'esultats r\'ecents d'alg\`ebre commutative effective, S\'eminaire Bourbaki, Vol. 1989/90,  \textit{Ast\'erisque},  No. 189-190,  (1990), Exp. No. 718, 107-131.
\bibitem[Te12]{Te} B. Teissier, Some resonances of \L ojasiewicz inequalities,  \textit{Wiadomo\'sci Matematyczne}, \textbf{48}, No. 2, 2012, 271-284..
\bibitem[To72]{To} J.-C. Tougeron, \textit{Id\'eaux de fonctions diff\'erentiables}, Ergebnisse der Mathematik und ihrer Grenzgebiete, Band 71. Springer-Verlag, Berlin-New York, (1972).

   \end{thebibliography}
 \end{document}